\theoremstyle{plain}
\newtheorem{theorem}{Theorem}[section]
\newtheorem{proposition}[theorem]{Proposition}
\newtheorem{corollary}[theorem]{Corollary}
\newtheorem{lemma}[theorem]{Lemma}
\newtheorem{definition}[theorem]{Definition}
\newtheorem{example}[theorem]{Example}
\theoremstyle{definition}
\newtheorem{remark}[theorem]{Remark}
\newcommand{\AS}{\xrightarrow{a.s.}}
\newcommand{\pref}[1]{\hyperref[#1]{[p. \pageref{#1}]}}
\newcommand{\old}[1]{}
\newcommand{\defeq}{\vcentcolon=}
\newcommand{\distto}{%
	\mathrel{\vbox{\offinterlineskip\ialign{%
				\hfil##\hfil\cr
				$\scriptscriptstyle\mathrm{d}$\cr
				\noalign{\kern-.05ex}
				$\to$\cr
}}}}
\newcommand{\stabto}{%
	\mathrel{\vbox{\offinterlineskip\ialign{%
				\hfil##\hfil\cr
				$\scriptscriptstyle\mathrm{st}$\cr
				\noalign{\kern-.05ex}
				$\to$\cr
}}}}
\newcommand{\findimto}{%
	\mathrel{\vbox{\offinterlineskip\ialign{%
				\hfil##\hfil\cr
				$\scriptscriptstyle\mathrm{f.d.}$\cr
				\noalign{\kern-.05ex}
				$\to$\cr
}}}}
\newcommand{\Probto}{%
	\mathrel{\vbox{\offinterlineskip\ialign{%
				\hfil##\hfil\cr
				$\scriptscriptstyle\Prob$\cr
				\noalign{\kern-.05ex}
				$\to$\cr
}}}}
\newcommand{\TVto}{%
	\mathrel{\vbox{\offinterlineskip\ialign{%
				\hfil##\hfil\cr
				$\scriptscriptstyle\mathrm{TV}$\cr
				\noalign{\kern-.05ex}
				$\to$\cr
}}}}
\newcommand{\ind}[1]{\mathds{1}_{\{#1\}}}
\newcommand{\N}{\mathds{N}}
\newcommand{\Z}{\mathds{Z}}
\newcommand{\R}{\mathds{R}}
\newcommand{\C}{\mathds{C}}
\newcommand{\T}{\mathds{T}}
\newcommand{\U}{\mathcal{U}_{\infty}}
\newcommand{\imag}{\mathrm{i}}
\newcommand{\Prob}{\mathds{P}}
\DeclareMathOperator{\Var}{\mathrm{Var}}
\DeclareMathOperator{\Cov}{\mathrm{Cov}}
\newcommand{\E}{\mathds{E}}
\renewcommand{\L}{\mathcal{L}}
\newcommand{\F}{\mathcal{F}}
\newcommand{\cG}{\mathcal{G}}
\newcommand{\eqdist}{%
	\mathrel{\vbox{\offinterlineskip\ialign{%
				\hfil##\hfil\cr
				$\scriptscriptstyle\mathrm{law}$\cr
				\noalign{\kern.2ex}
				$=$\cr
}}}}
\newcommand{\supp}{\mathrm{supp}}
\newcommand{\dt}{\mathrm{d} \mathit{t}}
\newcommand{\type}{\mathrm t}
\title{Limit theorems for discrete multitype branching processes counted with a characteristic}
\author{Konrad Kolesko and  Ecaterina Sava-Huss}
\date{\today}
\begin{document}
\maketitle

\begin{abstract}
For a discrete time multitype supercritical Galton-Watson process $(Z_n)_{n\in \N}$ 
and corresponding genealogical tree ${\T}$, we associate a new discrete time process $(Z_n^{\Phi})_{n\in\N}$ such that, for each $n\in \N$, the contribution of each individual $u\in\T$ to $Z_n^{\Phi}$ is determined  by a (random) characteristic $\Phi$ evaluated at the age of $u$ at time $n$. In other words, $Z_n^{\Phi}$ is obtained by summing over all $u\in \T$ the corresponding contributions $\Phi_u$, where $(\Phi_u)_{u\in \T}$ are i.i.d. copies of $\Phi$. Such processes are known in the literature under the name of {\it Crump-Mode-Jagers (CMJ)  processes counted with characteristic $\Phi$}. We derive a LLN and a CLT for the process $(Z_n^{\Phi})_{n\in\N}$ in the discrete time setting, and in particular, we show a dichotomy in its limit behavior. By applying our main result, we also obtain a generalization of the results in Kesten-Stigum \cite{kesten-stigum-add}. 
\end{abstract}

\textit{2020 Mathematics Subject Classification.} 60J80, 60J85, 60B20, 60F05.\\
\textit{Keywords}: supercritical branching process, spectral radius, spectral decomposition, Galton-Watson tree, characteristics, fluctuations,  martingale, Kesten-Stigum.

\section{Introduction}
For a fixed natural number $J$, the general Crump-Mode-Jagers (or shortly CMJ) branching process with $J$ types is a classical model
for an evolving population, in which for a collection $\Xi\defeq\{\xi^{(i,j)}\}_{i,j\le J}$ of point processes on $(0,\infty)$, the evolution  goes as follow.
We start with the initial ancestor $\varnothing$ of fixed type $i_0\in\{1,\dots,J\}$ at time $0$, and  for any $j\in\{1,\dots,J\}$, $\varnothing$ produces offspring of type $j$ at the times of the point process $\xi^{(i_0,j)}$.
Every other individual in the process reproduces according to an independent copy of $\Xi$ shifted by the individual's time of birth.
Thus  individuals that are ever born form a multitype Galton-Watson tree $\T$. For any $u\in\T$, by $\type(u)$ we denote the type of $u$, i.e.~a number from the set $\{1,\dots,J\}$ and by $S(u)$ we denote the time when the individual $u$ is born. In addition, for any $u\in\T$ we assign a function $\Phi_u$ which is an independent copy of some given random function
$\Phi:(0,\infty)\times \{1,\dots,J\}\to\R$.
The {\it multitype Crump-Mode-Jagers (CMJ) process counted with characteristic $\Phi$} is the process $(Z^\Phi_t)_{t\geq 0}$ defined as
$$Z^\Phi_t\defeq\sum_{u\in\T}\Phi_u(t-S(u),\type(u)),$$
that is, the contribution of a given individual $u\in\T$ to $Z^{\Phi}_t$ involves  the assigned function $\Phi_u$, the age of $u$ at time $t$ and the type of $u$.
This model extends a variety of other models such as multitype Galton-Watson processes,
age-dependent branching processes, Bellman-Harris processes and Sevast'yanov processes.
When counted with a random characteristic, the general branching process offers a lot of flexibility in modeling
and allows considerations of, for instance, the number of individuals in the population
in some random phase of life or having some random age-dependent property.
We refer the reader to \cite{Jagers:1975} for a textbook introduction. 

First order asymptotics of $(Z^\Phi_t)_{t\geq 0}$ are by now quite well understood.
The strong law of large numbers in the single type case $J=1$ has been proven by Nerman \cite{Nerman:1981}, while the lattice version of Nerman's law of large numbers was proved by Gatzouras \cite{Gatzouras:2000}. The general type space (even infinite) has been studied by Jagers \cite{jagers} who proved the weak law of large numbers, while the corresponding strong law of large numbers was provided in Olofsson \cite{olofsson2009}; see \cite{iksanov-meiners} for an alternative proof. These results can be summarized by saying that under suitable conditions on $\Xi$ and on  the (random) characteristic $\Phi$ there exists a parameter $\alpha>0$, called \emph{the Maltusian parameter}, such that $e^{-\alpha t}Z^{\Phi}_t$ converges as $t\to\infty$  almost surely or in probability to some random variable $X^\Phi$. It is then natural to ask about lower order terms in the asymptotic expansion $Z^\Phi_t=e^{\alpha t}X^\Phi+o(e^{\alpha t})$.

The fluctuations of single-type CMJ processes counted with characteristic $\Phi$ have been meanwhile investigated in a sequence of works. One particular example is the  Kolmogorov's conservative fragmentation model that may be translated into the language of general branching processes. For such processes,  Janson and Neininger \cite{Janson+Neininger:2008} proved a corresponding limit theorem.
Janson \cite{Janson:2018} studied asymptotic fluctuations of single-type supercritical general branching processes
in the lattice case  for some particular choice of the characteristic in which the limit in Nerman's theorem from \cite{Nerman:1981} vanishes. Recently, central limit theorems for such general single type branching processes have been established in  \cite{iksanov-kolesko-meiners2021}. 

Much less is known in the multitype case, i.e.~when $J>1$. 
There are partial results indicating the intricate nature of the limit theorems  that can occur.
For multitype continuous-time Markov branching processes with finite type space, in which
individuals give birth only at the time of their death,
Athreya \cite{athreya-cont1,athreya-cont2} proved a central limit theorem
and Janson \cite{janson-fclt-urns} proved a corresponding functional central limit theorem.
Asmussen and Hering \cite[Section VIII.3]{Asmussen+Hering:1983} provide
results for the asymptotic fluctuations of multitype Markov branching processes
with rather general type space.
Let us remark that although most of the   above mentioned results have different proofs, they are somehow similar in nature, that is, some kind of dichotomy/trichotomy appears in the limit behavior. This may indicate the existence of a limit theorem for general CMJ processes $(Z^\Phi_t)_{t\geq 0}$ that generalizes the previous results.

\textbf{Our contribution.} In the underlying work we take a step forward in understanding higher order asymptotics for multitype  CMJ processes counted with characteristics $\Phi$. The main restriction imposed in the current paper is that for all $1\le i,j\le J$,  
\begin{align}
	\label{eq:supported at 1}
	\operatorname{\supp} \xi^{(i,j)}\subseteq\{1\},
\end{align}
that is, each particle can give birth to new particles only at age one. More precisely, we consider a supercritical multitype Galton-Watson branching process $(Z_n)_{n\in\N}$ with $J$ types $Z_n=(Z_n^1,...,Z_n^J)$, where for $j\in\{1,\ldots,J\}$, $Z_n^j$ stands for the number of particles of type $j$ born in the $n$-th generation. We denote by $\T$ the corresponding genealogical Galton-Watson tree. For each individual $u\in\T$ ever born we associate a weight function $\Phi_u$, which is an independent (of everything) copy  of some given random function  $\Phi:\Z\times \{1,\dots,J\}\to \C^{d}$
that we call {\it characteristic of dimension $d\in\N$}.  
This work focuses on asymptotic fluctuations  of the discrete time stochastic process $(Z_n^{\Phi})_{n\in \N}$ defined as
$$Z_n^{\Phi}=\sum_{u\in \T}\Phi_u(n-|u|,{\type(u)}),$$
where  $|u|$  stands for the generation  of the individual $u\in\T$. 
We keep the established terminology and call $(Z_n^{\Phi})_{n\in \N}$ \textit{branching process counted with characteristic $\Phi$}. 
Our setup and assumptions regarding the process $(Z_n^{\Phi})_{n\in \N}$ are described in detail in Section \ref{sec:branch-char}.
Assuming positive regularity and existence of second
moments for the multitype branching process $(Z_n)_{n\in \N}$, and mild conditions on the characteristic $\Phi$, we establish in Theorem~\ref{thm:main} a dichotomy in the asymptotic behavior of $(Z_n^{\Phi})_{n\in\N}$.
The dichotomy is based on the existence of eigenvalue $\lambda$ of the mean offspring matrix $A\defeq (\E\xi^{(i,j)}((0,\infty)))_{1\le i,j\le J}$ such that $|\lambda|=\sqrt\rho$.

The motivation for this work is to demonstrate how the methods from \cite{iksanov-kolesko-meiners2021} can be applied in order to study particular multitype CMJ processes for which \eqref{eq:supported at 1} hold, and 
one possible application of our results is in the context of urn models in  \cite{kolesko-sava-huss-urn}.  We also obtain a generalization of the results from \cite{kesten-stigum-add}, by applying the main result Theorem~\ref{thm:main} to the particular choice of the characteristic $\Phi(k)\defeq \mathbf{1}\cdot\ind{k=0}$.

This work can  be seen as a first step forward in  answering 
Question 1 from \cite{iksanov-kolesko-meiners2021}.  The choice of our model allows us to avoid many  issues that arise in the general case, that is without condition \eqref{eq:supported at 1}, both in lattice and non-lattice case (i.e. when all $\xi^{(i,j)}$ are supported on some fixed lattice or not).
For example, under condition  \eqref{eq:supported at 1} we can easily establish the asymptotic expansion of the mean $\E Z_n^\Phi$ in terms of the spectral decomposition of the mean offspring matrix $A$ and this expansion is then required to conclude the limit theorem for $Z_n^{\Phi}$. On the other hand, the fact that the branching processes studied in this paper are multidimensional causes additional complications not present in \cite{iksanov-kolesko-meiners2021}, e.g. the presence of a nilpotent part in the Jordan decomposition of $A$ gives rise to a polynomial  correction in the limit theorem.

Let us emphasize that the role of the eigenvalues of $A$ in the one-type case \cite{iksanov-kolesko-meiners2021} are played by, after taking the exponent,   the solutions of the equation
$$\int_0^\infty e^{-zt}\E\xi(\dt)=1.$$
Thus, a limit theorem for a general   multitype CMJ processes should cover both described above cases and it is natural to expect that in this case the corresponding exponents will be given by the solutions of the following matrix equation:
$$\int_0^{\infty}e^{-zt}\E\xi^{(i,j)}(dt)=\ind{i=j}\quad\text{for all }i,j\in\{1,\dots,J\}.$$
However, at the moment, even this exact statement is unclear and  the limit theorem for a general multiple CMJ processes (both lattice and non-lattice case) will be the subject of future research.

Finally, the investigation of such processes has yet another not obvious motivation in the context of rotor walks with random initial configuration on trees, see \cite{rotor-GWtrees,range-rotor2,range-rotor1}. In these papers the authors  investigate law of large numbers for a class of processes called \textit{rotor walks},  processes that interpolate between random walks and deterministic walks; see also \cite{huss-levine-sava} for a functional limit theorem of the interpolated processes. The range $R_n$ (number of distinct sites visited by the walk up to time $n$) is strongly related to the size of random trees that are constructed recursively from Galton-Watson trees.% In order to understand second order asymptotics for the range of such walks, one needs central limit theorems for general functions of branching processes, as is the case of the current work. Proving central limit theorems for range of rotor walks on trees was one of our initial goals, but we came to the conclusion that such processes may be interesting in a more general context than the one needed in the context of rotor walks.

\textbf{Structure of the paper.} In Section \ref{sec:prelim} we fix the notation, introduce Ulam-Harris trees, multitype Galton-Watson processes,  and we describe the spectral decomposition of the mean value matrix of the multitype Galton-Watson processes. Then, in Section \ref{sec:branch-char} we introduce \emph{branching processes counted with a characteristic} and the natural assumptions for the characteristic, and we state our main result Theorem \ref{thm:main}. Afterwards, in order to prove the main result, we proceed by proving a series of auxiliary results concerning branching processes with deterministic and with random centered characteristics in Section \ref{sec:aux-results}. The last Section \ref{sec:proof-main-result} is dedicated to the proof of Theorem \ref{thm:main}. Finally, as an application of our main results, we show how to extend the results in Kesten-Stigum \cite{kesten-stigum-add} by a specific choice of the characteristic.

\section{Preliminaries}\label{sec:prelim}

\textbf{Conventions and notations.} 
The number $J\in \N$ will be reserved for the number of types of the multitype Galton-Watson process, and we will often write $[J]$ for the set $\{1,\ldots,J\}$.
All our vectors will be regarded as column vectors, and for $\mathrm{u}\in \R^J$  the notation $\mathrm{u}=(\mathrm{u}_1,\ldots,\mathrm{u}_J)$ is purely for convenience. When we deal with row vectors, it will be either mentioned explicitly, or it will be clear from the context. 
%For any $J$-dimensional vector $\mathrm{x} = (\mathrm{x}_1, \dots , \mathrm{x}_J)\in\C^J$, write $|\mathrm{x}| \defeq |\mathrm{x}_1| + \dots+ |\mathrm{x}_J|$ and $\|\mathrm{x}\|\defeq\sqrt{|\mathrm{x}_1|^2 + \dots+ |\mathrm{x}_J|^2}$ for the $\mathcal{L}^1$ and $\L^2$-norm of $\mathrm{x}$.
For $j\in [J]$, we denote by  $\mathrm{e}_j=(0,\ldots,1,\ldots,0)$ the $j$-th standard basis vector in $\R^J$.
% that has $1$ in the $j$-th coordinate, and all the other coordinates are $0$. 

For a $n\times m$ matrix $A=(a_{ij})_{i,j}$, with $m,n\in\N$, the Hilbert–Schmidt norm of $A$, called also Frobenius norm, is defined as
\vspace{-0.25cm}
 $$\|A\|_{HS}\defeq \Big(\sum_{i=1}^n\sum_{j=1}	^m|a_{ij}|^2\Big)^{1/2}.$$
Since for any vector the Hilbert-Schmidt norm coincides with the Euclidean norm,  for the rest of the paper we shall only write $\|\cdot\|$ instead of $\|\cdot\|_{HS}$.  

We fix a probability space $(\Omega,\F,\Prob)$ on which all the random variables and processes we consider are defined.
%For complex random variables $X,Y$ we use the notation
%$$\Cov(X,Y)\defeq\E[(X-\E X)(\overline{Y-\E Y})]$$ 
%and
% $\Var[X]\defeq\Cov(X,X)$.
For a random matrix $L=(L_{ij})_{i,j}$ (of any size) we define a deterministic matrix $\Var[L]$ of the same size as $L$ by taking variances of each entry in $L$:
 $$\Var[L]:=(\Var[L]_{ij})_{i,j}=(\Var[L_{ij}])_{i,j}.$$
 
%For real valued functions $f,g:\R\to\R$, we write $f\sim g$ if $\lim_{x\to\infty}\frac{f(x)}{g(x)}=1$, and $f(x)=O(g(x))$ as $x\to\infty$ is there is a constant $C>0$ such that $f(x)\leq Cg(x)$ for $x$ big enough. Also $f(x)=o(g(x))$ if $\lim_{x\to\infty}\frac{f(x)}{g(x)}=0$, if $g(x)$ is not zero.

%\paragraph*{Stable convergence.} Let us recall the concept of stable convergence. 
%A sequence $(Y_n)_{n\in\N}$ of random variables defined on the probability space $(\Omega,\mathcal F,\Prob)$ \emph{converges stably in distribution} to a random variable $Y$ defined on an extension of $(\Omega,\mathcal F,\Prob)$, written  $$Y_n\stabto Y,$$ if  for any bounded continuous function $f:\R\to\R$ and for any $\mathcal{F}$-measurable bounded random variable $Z$, it holds:
%$$\lim_{n\to\infty}\E[f(Y_n)Z]=\E[f(Y)Z].$$
%Alternatively (cf. \cite[Proposition 1.]{aldous-eagleson}), for any fixed $\mathcal F$-measurable random variable $X$, it holds $$(X,Y_n)\distto(X,Y), \quad \text{as }n\to\infty.$$ 

%
%For random variables $X_n$ defined on some probability space $(\Omega,\mathcal F,\Prob)$
%we say that $X_n$ converges stably with limit $Y$, written $$X_n\stabto X$$ where $X$ is defined
%on an extension of $(\Omega,\mathcal F,\Prob)$, iff for any bounded, continuous function $g$ and any bounded
%$\mathcal F$--measurable random variable $Y$ it holds that
%$$\E[Yg(X_n)]\to\E[Yg(X)].$$

\subsection{Multitype branching processes}

\textbf{The Ulam-Harris tree.}
It is convenient when investigating limit theorems for Galton-Watson processes and trees, to look at such trees as subtrees of the infinite Ulam-Harris  tree $\U$ that we define now. The vertex set of $\U$ is  $V_{\infty}\defeq \bigcup_{n \in \N_0} \N^n$, the set of all finite strings or words $v_1\cdots v_n$ of positive integers over $n$ letters, including the empty word $\varnothing$ which we take to be the root, and with an edge joining $v_1\cdots v_n$ and $v_1\cdots v_{n+1}$ for any $n\in \N_0$ and any $v_1, \cdots, v_{n+1}\in \N$. Thus every vertex $v=v_1\cdots v_n$ has outdegree $\infty$, and the children of $v$ are the words $v1,v2,\ldots$ and we let them have this order so that $\U$ becomes an infinite ordered rooted tree. We will identify $\U$ with its vertex set $V_{\infty}$, when no confusion arises and for vertices $v=v_1\cdots v_n$ we also write $v=(v_1,\ldots,v_n)$, and if $u=(u_1,\ldots,u_m)$ we write $uv$ for the concatenation of the words $u$ and $v$, that is  $uv=(u_1,\ldots,u_m,v_1,\ldots,v_n)$. The parent of $v_1\cdots v_n$ is $v_1\cdots v_{n-1}$.
Further, if $k \leq m$, we set $u|_k \defeq u_1 \ldots u_k$ for the vertex $u$ truncated at height $k$.
Finally, for $u \in \U$, we use the notation $|u|=n$ for $u \in\N^n$, i.e. $u$ is a word of length (or height) $n$, that is, at distance $n$ from the root $\varnothing$. The family $\mathcal{T}$ of ordered rooted trees can be identified with the set of all subtrees $\T$ of $\U$ that have the property that for all $v\in V_{\T}$:
$$vi\in V_{\T}\ \quad  \text{implies}\quad  \ vj\in V_{\T}, \quad \text{for all }j\leq i.$$
Above $V_{\T}$ is the vertex set of $\T$, and we identify it with the tree $\T$ itself when no confusion arises.
For a tree in $\mathcal{T}$  which is not rooted at $\varnothing$, but at some other vertex $u\in \U$, we write $\T_u$. For two vertices $u,v\in \U$ we denote by $d(u,v)$ their graph distance, that is, the length of the shortest path between $u$ and $v$.
For trees rooted at $\varnothing$, we omit the root and we write only $\T$.
%Finally, by identifying $\mathbb{T}$ with its vertex set $V(\mathbb{T})$, we will regard $\mathcal{T}$ as the family of all rooted (not rooted necessarly at $\varnothing$) subtrees $\mathbb{T}$ of $\U$ that satisfy:
%\begin{align*}
%& \varnothing  \in \mathbb{T},\\
 %& v_1\cdots v_{n+1} \in \T\  \Rightarrow \  v_1\cdots v_{n}\in \T\\
 %& v_1\cdots v_ni  \in \T\  \Rightarrow \ v_1\cdots v_nj\in \T, \quad \forall \ j\leq i.\\
%\end{align*}

For $J\in \N$, a $J$-type tree is a pair $(\T,\type_{\T})$ where $\T\in \mathcal{T}$ and $\type_{\T}:\T\to\{1,\ldots,J\}$ is a function defined on the vertices of $\T$ which returns for each vertex $v$ its type $\type_{\T}(v)$. We will often omit the subindex $\T$ in the type function $\type$ if it is clear from the context the tree we are referring to. We denote by $\mathcal{T}^{[J]}$ the set of all $J$-type trees, and elements of $\mathcal{T}^{[J]}$ will be referred to as  $\T$ without explicitly mentioning the type function $\type$.

\textbf{Multitype Galton-Watson Processes.}
Galton-Watson trees are an important class of random rooted trees that are defined as the family trees of Galton-Watson processes. Having introduced the Ulam-Harris tree above, we can now regard Galton-Watson processes as $\mathcal{T}$-valued random variables.
Let $J\in \N$ be the number of types of particles, and  consider  a sequence  $(L^{(j)})_{j\in [J]}$ of $\N^{J}$-valued, $J$ independent random (column) vectors, and denote by $L$ the $J\times J$ random matrix with column vectors $L^{(j)}$:
\begin{small}$$L=\big(L^{(1)},L^{(2)},\ldots,L^{(J)}\big)
=\begin{pmatrix}
   L^{(1,1)}  & L^{(2,1)} &  \cdots & \cdots  &  L^{(J,1)}\\
   L^{(1,2)}  & L^{(2,2)} & \cdots  & \cdots  & L^{(J,2)} \\
    \vdots    & \vdots    & \vdots &  \vdots & \vdots \\
   L^{(1,J)}   & L^{(2,J)}  & \cdots & \cdots   & L^{(J,J)}
   \end{pmatrix},
  $$\end{small}where for $j\in [J]$ the column vector $L^{(j)}$ is given by $L^{(j)}=\left(L^{(j,1)},\ldots,L^{(j,J)}\right)$. 
We think of $L^{(j)}=L\mathrm{e}_j$ as the offspring vector produced by a particle of type $j$. More precisely, the  matrix $L=(L_{ij})_{i,j\in [J]}$ with $L_{ij}=L^{(j,i)}$ represents the offspring distribution matrix, so for  $i,j\in [J]$, $L^{(j,i)}$ is the number of offspring of type $i$ produced by a particle of type $j$ (here $L^{(j,i_1)}$ and $L^{(j,i_2)}$ might be dependent).
We denote by $a_{ij}=\E\left[L_{ij}\right]$ the expectation of the random variable $L_{ij}$, for all $i,j\in [J]$ and by $A=(a_{ij})_{i,j\in [J]}$ the first moment  or the mean offspring matrix. We have  $\E L=A$.
% The matrix $A$ and its eigenvalues will play a crucial role during the current work.

Note our choice of notation; it may be more natural to use the transpose of the matrix $L$ and its componentwise mean matrix $A$, as  done in the relevant literature on multitype branching processes. The reason for our choice is that we need the standard notation where a matrix is regarded as an operator acting on column vectors to the right, as opposed to the Markov chains notation, where the transition matrix acts on row vectors to the left. Even if we work with the transpose $L$ of the offspring distribution matrix  $L^{\top}$, we will still, by a slight abuse of notation, refer to $L$ and $A$ as the \textit{offspring distribution matrix} and the \textit{mean offspring matrix}, respectively.

The matrix $A$ is called positively regular, if for every $i, j ∈ [J]$, there is
some $n\in \N$ so that, the $(i,j)$-entry  of $A^n$ is positive.
With the assumption of A being positive regular, Perron–Frobenius theorem ensures that the eigenvalue $\rho$ of $A$ with maximum modulus is real, positive, and simple, and
there exist left and right eigenvectors of $A$, denoted by  $\mathrm{v}=(\mathrm{v}_1,\ldots,\mathrm{v}_J)$ and $\mathrm{u}=(\mathrm{u}_1,\ldots,\mathrm{u}_J)$ respectively, such that
\begin{align*}
&\mathrm{v}^{\top}A=\rho \mathrm{v}^{\top},\quad \mathrm{v}_j>0,\quad 1\leq j\leq J\\
&A\mathrm{u}=\rho \mathrm{u},\quad \mathrm{u}_j>0,\quad 1\leq j\leq J\\
&\langle \mathrm{u},\mathrm{v}\rangle=\sum_{j=1}^J\mathrm{u}_j\mathrm{v}_j=1,
\end{align*}
so $\mathrm{u}$ and $\mathrm{v}$ are normalized such that their inner product is $1$. Remark that if $\mathrm{u}$ is a right eigenvector for $A$, then $\mathrm{u}^{\top}$ is a left eigenvector for $A^{\top}$.

With the random variables $L^{(i,j)}$, $i,j\in[J]$, as introduced above, we define
multitype Galton-Watson trees  as $\mathcal{T}^{[J]}$-valued random variables, where the type function $\type$ is random and defined in terms of i.i.d. copies of  the $J\times J$-random matrix $L$ as following. Let $(L(u))_{u\in \U}$ be a family of i.i.d copies of $L$ indexed after the vertices of $\U$.
For any $i\in [J]$, we define the random labeled tree $\T^{i}\in \mathcal{T}^{[J]}$ rooted at $\varnothing$, with the associated type function
$\type=\type^{i}:\T^{i}\to \{1,\ldots,J\}$ defined 
recursively as follows:
$$\varnothing\in\T^{i}\quad\text{and}\quad \type(\varnothing)=i.$$
Now suppose that $u=u_1\ldots u_m\in\T^{i}$ with $\type(u)=j$, for some $j\in[J]$. Then 
$$u_1 \ldots u_m k\in \T^{i} \quad\text{iff}\quad k\le L^{(j,1)}(u)+\dots+L^{(j,J)}(u)$$ and we set
$\mathrm \type(u_1 \ldots u_m k)=l$ whenever $$L^{(j,1)}(u)+\dots+L^{(j,l-1)}(u)<k\le L^{(j,1)}(u)+\dots+L^{(j,l)}(u).$$
%Sometimes it may be useful to extend the definition of $\type$ to all vertices   of $\U$ by setting $\type(u)\defeq 0$ whenever $u\notin \T^{i}$.
%\textcolor{purple}{Do we still need this? }For any $u\in \U$, by $\T^{i}_u$ we denote the labeled subtree rooted at $u$ with the type function $\type_u:\T^{i}_u\to [J]$ defined recursively  in the same way as $\T^{i}$: $\type_u(u)=i$, for any $v\in \U$ we set $L_u(v)\defeq L(uv)$ and now the definition of the labeling $\type_u$ on $\T^{i}$ uses $(L_u(v))_{v \in \U}$   in place of $(L(v))_{v \in \U}$. Note that for $(\T^{\type (u)},\type_u)\in \mathcal{T}^{[J]}$  it holds $\type(uv)=\type_u(v)$ i.e. $\type^{i}(uv)=\type_u^{\type^{i}(u)}(v)$.
Note that $L(u)e_{\type(u)}$ is the vector of children of the individual $u$.
The multitype branching process  $Z_n=(Z^1_n, \dots , Z^J_n)$ associated with $(\T^{i_0},\type)$ and starting from a single particle of type $i_0\in [ J]$ at the root $\varnothing$, that is $\type(\varnothing)=i_0$, is defined as: $Z_0=\mathrm{e}_{i_0} $ and for $n\geq 1$
\begin{align*}
Z^i_n\defeq\#\{u\in\T^{i_0}:|u|=n\text{ and }\type(u)=i\}, \quad \text{for } i\in [J],
\end{align*}
that is, $Z_n^i$ represents the number of particles of type $i$ in the $n$-th generation, or the number of vertices $u\in \T^{i_0}$ with $|u|=n$ and $\type(u)=i$. 
For the filtration $(\F_n)_{n\ge0}$ defined by $\F_n\defeq\sigma(\{L(u):|u|\le n\})$, it is easy to verify that 
\begin{align}
\label{eq:conditional relation}
\E[Z_{n+1}|\F_n]=A Z_n.
\end{align}
When referring to multitype Galton-Watson processes, we shall always have in mind both $(Z_n)_{n\in \N}$ and its genealogical tree $\T$, with the corresponding type function $\type$.
\paragraph{Assumptions.}
During the current work, we shall make the following assumptions for the multitype Galton-Watson process $(Z_n)_{n\in \N}$.
\vspace{-0.25cm}
\begin{enumerate}[(GW1)]
\setlength\itemsep{0em}
\item $(Z_n)_{n\in \N}$ is supercritical, that is, $\rho>1$.
\item The first moment matrix $A$ is positively regular.
\item $\mathbf{0}\neq \sum_{j=1}^J\Cov\left[L^{(j)}\right]$ and all the variances $\Var[L^{(i,j)}]$ are finite. %\textcolor{blue}{Should we also add the $L \log L $ condition?}
\end{enumerate}
Above,  $\Cov\left[L^{(j)}\right]$ is the covariance matrix of the random vector $L^{(j)}$, and $\mathbf{0}$ is the zero matrix of size $J\times J$. We recall that under assumptions (GW1)-(GW3), Kesten-Stigum theorem \cite{kesten1966} ensures the existence of a scalar random variable $W$ such that $\rho^{-n}Z_n\to W\mathrm{u}$ almost surely as $n\to \infty$. For the rest of the paper, when we use the random variable $W$, we always mean the limit random variable from the Kesten-Stigum theorem.

\textbf{Spectral decomposition of the matrix $A$.}
The spectral decomposition of the matrix $A$ will play an important role in the proofs. We denote by
$\sigma_A$ the spectrum of the matrix $A$
and  we split it as $\sigma_A=\sigma_A^{1}\cup\sigma_A^{2}\cup\sigma_A^{3}$ according to whether for $\lambda\in\sigma_A$, $|\lambda|$ is greater, equal or smaller than $\sqrt \rho$, respectively. 
From the Jordan-Chevalley decomposition (which is unique up to the order of the Jordan blocks) of $A$ we infer the existence of projections $(\pi_\lambda)_{\lambda\in \sigma_A}$
that commute with $A$ and satisfy $\sum_{\lambda\in\sigma_A}\pi_{\lambda}=I$
and 
$$A\pi_{\lambda}=\pi_{\lambda}A=\lambda\pi_{\lambda}+N_{\lambda}$$
where $N_{\lambda}=\pi_{\lambda}N_{\lambda}=N_{\lambda}\pi_{\lambda}$ is a nilpotent matrix. Moreover,
for any $\lambda_1,\lambda_2\in\sigma_A$ it holds  $\pi_{\lambda_1}\pi_{\lambda_2}=\pi_{\lambda_1}\ind{\lambda_1=\lambda_2}$. If $\lambda\in\sigma_A$ is a simple eigenvalue of $A$ and $\mathrm u_\lambda, \mathrm v_\lambda$ are the corresponding left and right eigenvectors normalized in such a way that $\mathrm v_\lambda\mathrm u_\lambda=1$  then 
$\pi_\lambda=\mathrm u_\lambda\mathrm v_\lambda$.
 If we write $N=\sum_{\lambda\in\sigma_A}N_{\lambda}$, then $N$ is also a nilpotent matrix and we have $N\pi_{\lambda}=N_{\lambda}$. So $A$ can be decomposed in the semisimple $D:=\sum_{\lambda\in\sigma_A}\lambda\pi_{\lambda}$ and the nilpotent part $N$ as:
$A=D+N$.
We define $\pi^{(1)},\pi^{(2)},\pi^{(3)}$ as following:
\begin{align*}
\pi^{(1)}\defeq\sum_{\lambda\in\sigma^1_A}\pi_\lambda,\quad
\pi^{(2)}\defeq\sum_{\lambda\in\sigma^2_A}\pi_\lambda,\quad
\pi^{(3)}\defeq\sum_{\lambda\in\sigma^3_A}\pi_\lambda.
\end{align*}
%So $\pi^{(1)}$ is the $J\times J$ matrix that coincides with the identity matrix on the Jordan blocks corresponding to eigenvalues $|\lambda|>\sqrt{\rho}$ and all the other entries of  $\pi^{(1)}$ are $0$. Similarly, $\pi^{(2)}$ is the $J\times J$ matrix that coincides with the identity matrix on the Jordan blocks corresponding to eigenvalues $\lambda\in \sigma_A$ with $|\lambda|=\sqrt{\rho}$,  and all the other entries are $0$. 
Clearly $\pi^{(1)}+\pi^{(2)}+\pi^{(3)}=I$, where $I$ is the $J\times J$ identity matrix. By $V^{(i)}$, for $i=1,2,3$ we denote the corresponding  images of the projection $\pi^{(i)}$. Thus in $V^{(1)}$, $V^{(2)}$ and $V^{(3)}$ we glob together the Jordan block invariant subspaces corresponding to the eigenvalues $\lambda$ that satisfy $|\lambda|>\sqrt{\rho}$, $|\lambda|=\sqrt{\rho}$, $|\lambda|<\sqrt{\rho}$ respectively.
Note that, as $V^{(i)}$, $i=1,2,3$ are orthogonal, we may write $\C^J$ as a direct sum $\C^J\defeq V=V^{(1)}\oplus V^{(2)}\oplus V^{(3)}$.  
Finally, for  $i=1,2$ we set 
$$A_i\defeq A\pi^{(i)}+\big(I-\pi^{(i)}\big),$$ 
i.e. $A_i$ acts as $A$ on $V^{(i)}$ and as identity on its orthogonal complement.
%In words, $A_1$ is the restriction of the matrix $A$ on the Jordan blocks associated to eigenvalues $\lambda\in \sigma_A^1$,  all the other Jordan blocks are replaced with the identity matrix (of the corresponding dimension), and outside the Jordan blocks, the matrix $A_1$ is $0$.
Clearly, $A_1$ and $A_2$ are both invertible. 

\begin{remark}
Taking  \eqref{eq:conditional relation} into account, one can easily check that for $i=1,2$ the process  $\big(W_n^{(i)}\big)_{n\in \N}$ defined by 
$$W_n^{(i)}\defeq A_i^{-n}\pi^{(i)} Z_n$$ is a $V^{(i)}$-valued $\F_n$-martingale.  
\end{remark}
\begin{lemma}
	\label{lem:convergence of martingales}
	Suppose that (GW1)-(GW3) hold. Then  the martingale $W^{(1)}_n$ converges in $\mathcal{L}^2$ to some random variable denoted $W^{(1)}$, as $n\to\infty$.
\end{lemma}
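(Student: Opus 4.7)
The plan is to verify that $(W_n^{(1)})_{n\ge 0}$ is bounded in $\mathcal{L}^2$; since it is already a martingale by the Remark immediately preceding the lemma, the standard $\mathcal{L}^2$ martingale convergence theorem then provides the limit $W^{(1)}$.

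First, I would compute the martingale increments. Because $\pi^{(1)}$ commutes with $A$ and $A_1$ agrees with $A$ on the range of $\pi^{(1)}$, one has
\begin{equation*}
W_{k+1}^{(1)}-W_k^{(1)} = A_1^{-(k+1)}\pi^{(1)}\bigl(Z_{k+1}-A Z_k\bigr),
\end{equation*}
and by orthogonality of martingale increments,
\begin{equation*}
\E\bigl[\|W_n^{(1)}\|^2\bigr] = \|W_0^{(1)}\|^2 + \sum_{k=0}^{n-1}\E\bigl[\|W_{k+1}^{(1)}-W_k^{(1)}\|^2\bigr].
\end{equation*}
So it suffices to show that the sum on the right is summable.

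Next, I would estimate each increment by combining a conditional variance computation with a spectral bound. Conditionally on $\F_k$, the vector $Z_{k+1}-AZ_k=\sum_{|u|=k}\bigl(L^{(\type(u))}(u)-A\mathrm{e}_{\type(u)}\bigr)$ is a sum of independent centered random vectors whose conditional covariances are the deterministic matrices $\Cov[L^{(\type(u))}]$. Hence
\begin{equation*}
\E\bigl[\|Z_{k+1}-AZ_k\|^2\,\big|\,\F_k\bigr] = \sum_{|u|=k}\operatorname{tr}\Cov\bigl[L^{(\type(u))}\bigr] \le C\,|Z_k|,
\end{equation*}
by assumption (GW3), where $|Z_k|=\sum_i Z_k^i$. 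Since $\E|Z_k|=\langle\mathbf{1},A^k Z_0\rangle = O(\rho^k)$, we obtain $\E\|Z_{k+1}-AZ_k\|^2 \le C\rho^k$. On the other hand, by construction every eigenvalue $\lambda$ of $A_1\pi^{(1)}$ on $V^{(1)}$ satisfies $|\lambda|>\sqrt{\rho}$, so, allowing for polynomial corrections from Jordan blocks of size at most $J$,
\begin{equation*}
\|A_1^{-(k+1)}\pi^{(1)}\|_{\mathrm{op}}^2 \le C\,(k+1)^{2(J-1)}\rho_1^{-2(k+1)},\qquad \rho_1\defeq\min\{|\lambda|:\lambda\in\sigma_A^1\}>\sqrt{\rho}.
\end{equation*}
Combining these two bounds gives
\begin{equation*}
\E\bigl[\|W_{k+1}^{(1)}-W_k^{(1)}\|^2\bigr] \le C\,(k+1)^{2(J-1)}\bigl(\rho/\rho_1^2\bigr)^{k},
\end{equation*}
and since $\rho/\rho_1^2<1$ by the very definition of $\sigma_A^1$, this sequence is summable.

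The only real subtlety, and the step I would take most care with, is the operator bound on $A_1^{-(k+1)}\pi^{(1)}$: one has to justify that the polynomial Jordan-block corrections are absorbed by the strictly geometric gain $\rho_1^{-1}<\rho^{-1/2}$. Once that is in hand, boundedness of $\E\|W_n^{(1)}\|^2$ is immediate, the $\mathcal{L}^2$ martingale convergence theorem applies, and the limit $W^{(1)}$ exists in $\mathcal{L}^2$ and takes values in $V^{(1)}$.
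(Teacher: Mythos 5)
Your proposal is correct and follows essentially the same route as the paper: both bound $\E\bigl[\|W^{(1)}_{k+1}-W^{(1)}_k\|^2\bigr]$ by splitting the increment into the operator norm of $A_1^{-(k+1)}\pi^{(1)}$ (controlled by the spectral gap $|\lambda|>\sqrt{\rho}$ on $V^{(1)}$, with the polynomial Jordan corrections harmless) times the conditional variance of $Z_{k+1}-AZ_k$, which is of order $\E|Z_k|=O(\rho^k)$, and then conclude $\mathcal{L}^2$ convergence from summability of these increment bounds. The only cosmetic difference is that the paper hides the Jordan-block polynomial inside a choice of $\delta>0$ with $\|A_1^{-n}\|^2\le C\rho^{-(1+\delta)n}$, while you keep it explicit; the substance is identical.
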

We postpone the proof to Section \ref{sec:martingales}.
Notice that  with our notation, the limit $W\mathrm{u}$  from Kesten-Stigum theorem is the projection of $W^{(1)}$ on $\mathrm {u}$ i.e. $\pi_{\rho}W^{(1)}=W\mathrm {u}$.

\section{Branching processes counted with characteristics}
\label{sec:branch-char}

We now introduce our main object of study, a discrete time stochastic process that we call  \emph{branching processes counted with a characteristic} or simply \emph{branching processes with a characteristic}. 
\begin{definition}
Let $d\in\N$ be fixed. A \emph{characteristic} of  dimension $d$ is a random function $\Phi:\Z\to \C^{d\times J}$,  that is, for each $k\in \Z$, $\Phi(k)$ is a $\C^{d\times J}$-valued random variable defined on the same probability space where the branching process is defined.
\end{definition}
By a {\it deterministic characteristic $\Phi$} we mean just a fixed doubly-infinite sequence $(\Phi(k))_{k\in \Z}$ of matrices in $\C^{d\times J}$, i.e. a deterministic function defined on $\Z$.
The $j$-th column vector of $\Phi(k)$ is then
$\Phi(k)\mathrm{e}_j\in \C^{d}$, for every $k\in\Z$ and $j\in[J]$.
We assume that for any $k\in\Z$, $\E\left[\|\Phi(k)\|\right]<\infty$ and for any $u\in\U$ there is a $\Phi_u$ such that 
$$\left((L(u),\Phi_u)\right)_{u\in\U}$$ is a collection of i.i.d.  copies of the pair $(L,\Phi)$. That is, any vertex $u\in \U$ is equipped with its own pair $(L(u),\Phi_u)$ of offspring distribution matrix $L(u)$ and the sequence $(\Phi_u(k))_{k\in \Z}$ of $d\times J$ random matrices, respectively, and we interpret $\Phi_u(k)$ as the characteristic of $u$ at age $k$, for $k\in\Z$.   If $\Phi$ is a deterministic function of the offspring distribution matrix $L$, then the above condition is automatically satisfied.  A characteristic of dimension $d=1$ is a function $\Phi:\Z\to \C^{1\times J}$, i.e.  each $\Phi(k)$ is a row vector with $J$ entries.

\begin{definition} \label{def:br-char}Let $(\T,\type)\in \mathcal{T}^{[J]}$ be a  multitype Galton-Watson tree with $J$ types. To the pair $(\T,\type)$ and the characteristic $\Phi:\Z\to \C^{d\times J}$ we associate the stochastic process $(Z_n^{\Phi})_{n\in \N}$, defined as: for every $n\in \N$
\begin{align}
	\label{eq:definition of CMJ}
Z^\Phi_n=\sum_{u\in\T}\Phi_u(n-|u|)\mathrm{e}_{\type(u)}.
\end{align}
We call $(Z^\Phi_n)_{n\in \N}$  a branching process counted with characteristic $\Phi$.
\end{definition}
Heuristically, the branching process $(Z^\Phi_n)_{n\in \N}$  with the (random or deterministic)
characteristic $\Phi$ counts all the individuals with the corresponding types in the multitype Galton-Watson tree $(\T,\type)$,
and the contribution of each individual to the sum is determined by the characteristic $\Phi$. This characteristic may take into account some aspects of the individual such as age, health, siblings, etc.
For every $n\in \N$,  $Z_n^{\Phi}$ is a vector in $\C^d$.
For a deterministic characteristic $\Phi$, i.e. a fixed function, we have  for every $n\in\N$
\begin{align*}
Z^\Phi_n=\sum_{k=0}^{\infty}\Phi(n-k)Z_{k}=\sum_{k=-\infty}^n\Phi(k)Z_{n-k}.
\end{align*} 
\begin{example} A trivial example of  a deterministic characteristic of dimension $J$ is 
$$\Phi(k)=\ind{k=0}I,$$
where $I$ is the $J\times J$ identity matrix. In this case we recover the multitype Galton-Watson process, that is $Z_n^{\Phi}=Z_n$, for all $n\in\N$.
\end{example}
%The process $Z^\Phi_n$ is well defined with finite expectation if $\sum_{k\in\Z }\E\left[\|\Phi(k)\|\right]\rho^{-k}<\infty$ since
%\begin{align*}
%\E \Big[\sum_{u\in\T}\|\Phi_u(n-|u|)\|\Big]=\sum_{k= 0}^{\infty}\E\left[\|\Phi(n-k)\|\right]\langle{\bf 1},A^kZ_0\rangle.
%\end{align*} 
\paragraph{Assumptions on the characteristic $\Phi$.}
Our standing assumptions on the characteristic $\Phi$ during the current work are the following: 
%\begin{equation}\label{eq:expec-char}
%\E\left[\|\Phi(k)\|\right]<\infty,\quad \text{for all }k\in \Z\tag{CH1}
%\end{equation}
\begin{equation}\label{eq:main-assum1}
\sum_{k\in \Z}\big\|\E[\Phi(k)]\big\|(\rho^{-k}+\vartheta^{-k})<\infty,\tag{CH1}
\end{equation}
for some $\vartheta<\sqrt{\rho}$ and
\begin{equation}\label{eq:main-assum2}
\sum_{k\in \Z}\|\Var[\Phi(k)]\|\rho^{-k}<\infty.\tag{CH2}
\end{equation}
Note that for an arbitrary characteristic $\Phi$, the convergence of the series in \eqref{eq:definition of CMJ} is not obvious. We comment first on why $(Z_n^{\Phi})_n$ given by \eqref{eq:definition of CMJ} is well defined.
\begin{proposition}
Suppose that the random characteristic $\Phi$ satisfies \eqref{eq:main-assum1} and \eqref{eq:main-assum2}. Then the infinite series
$$\sum_{u\in\U}\ind{u\in\T}\Phi_u(n-|u|)\mathrm{e}_{\type(u)}$$
converges unconditionally  in $\L^1$ (the limit does not depend on the ordering of vertices in $\U$). In particular, the process $(Z^\Phi_n)_{n\in \N}$ given by \eqref{eq:definition of CMJ} is well-defined.
\end{proposition}	
\begin{proof}
	We have
\begin{align*}
	\E\Big[\sum_{u\in\T}\|\E\Phi_u(n-|u|)\|\Big]=\sum_{k= 0}^{\infty}\E\left[\|\Phi(n-k)\|\right]\langle{\bf 1},A^kZ_0\rangle<\infty,
\end{align*} 
that is, the series defining $Z^{\E\Phi}_n$ converges absolutely and so  unconditionally in $\L^1$. On the other hand, if for a finite $U\subset\U$ we set 
$$Z(U)\defeq\sum_{u\in\T\cap U}\big(\Phi_u(n-|u|)-\E\Phi_u(n-|u|)\big)\mathrm{e}_{\type(u)}$$
then for $U\subseteq V\subset\U$, as the cross-terms vanish, we have 
\begin{align*}
	\E\Big[&\big(Z(V)-Z(U)\big)^2\Big]=\E\Big[\sum_{u\in\U}\ind{u\in\T\cap (V\setminus U)}\Var[\Phi(n-|u|)]\mathrm{e}_{\type(u)}\Big]\\
	&\le\E\Big[\sum_{u\in\U}\ind{u\in\T}\Var[\Phi(n-|u|)]\mathrm{e}_{\type(u)}\Big] 
	= \rho^n\sum_{k\le n}\|\Var[\Phi(k)]\|\rho^{-k}<\infty,
\end{align*}
and so for any increasing sequence $U_n\nearrow\U$, by the dominated convergence theorem $Z(U_n)$ is a $\L^2$-Cauchy sequence  and so it converges. This justifies that the series defining $Z_n^{\Phi-\E\Phi}$ converges unconditionally in $\L^2$ and so in $\L^1$ as well.
\end{proof}

For a given characteristic $\Phi$ of dimension $d$ that fulfills \eqref{eq:main-assum1} and \eqref{eq:main-assum2} we finally define two $d\times J$ matrices $\mathrm{x}_1$ and $\mathrm{x}_2$ by
\begin{equation}\label{eq:vect-x1-x2}
\mathrm{x}_1=\mathrm{x}_1(\Phi)\defeq\sum_{k\in \Z}\E[\Phi(k)]\pi^{(1)}A_1^{-k}\quad \text{and} \quad \mathrm{x}_2=\mathrm{x}_2(\Phi)\defeq\sum_{k\in \Z}\E[\Phi(k)]\pi^{(2)}A_2^{-k},
\end{equation}
and for  $l=0,\dots,J-1$, the constants
\begin{align}
\label{eq:definition of sigma_l}
\sigma_l^2\defeq \frac{\rho^{-l}}{(2l+1)(l!)^2}\sum_{\lambda\in\sigma^2_A}\Var \Big[\mathrm{x} _2 \pi_\lambda (A-\lambda I)^{l}L\Big]\mathrm{u}.
\end{align}
Note that if $l$ is larger than the size of the largest Jordan block corresponding to  eigenvalues $\lambda\in \sigma_A^2$ then $(A-\lambda I)^{l}=0$ and therefore  $\sigma_l$ is also $0$. On the other hand, it may happen that for all eigenvalues $\lambda$ on the critical  circle $\{|z|=\sqrt \rho\}$, we have 
$$\mathrm{x} _2 \pi_\lambda (A-\lambda I)^{l}L=\mathrm{x} _2 \pi_\lambda (A-\lambda I)^{l}A\neq0$$
i.e. the matrix $L$ is deterministic in some direction, and this  also yields  $\sigma_l=0$. 
Our main result is the following; below we write $\stabto$ for stable convergence.
\begin{theorem}\label{thm:main}
Assume $(GW1)-(GW3)$ hold and let $\Phi:\Z\to\C^{1\times J}$ be a  random characteristic of dimension one that satisfies \eqref{eq:main-assum1} and \eqref{eq:main-assum2}. Then for a standard normal random variable $\mathcal{N}(0,1)$ independent of $W$, the following stable convergence holds.
\vspace{-0.25cm}
\begin{enumerate}[i)]
\setlength\itemsep{0em}
\item If $\sigma_l=0$ for all $0\le l\leq J-1$,  then there exists a constant $\sigma\ge0$ such that either $\sigma>0$ and
\begin{align*}
\frac{Z_n^{\Phi}-\mathrm{x_1}A^n_1 W^{(1)}-\mathrm{x_2}A^n_2Z_0 }{\rho^{n/2}}\stabto \sigma\sqrt W\cdot\mathcal{N}(0,1),\quad \text{as } n\to\infty
\end{align*} 
or $\sigma=0$ and the left hand side above is a deterministic sequence converging to 0.
\item Otherwise, let $0\le l\le J-1$ be maximal such that $\sigma_l\neq0$. Then
\begin{align*}
\frac{Z_n^{\Phi}-\mathrm{x_1}A_1^n W^{(1)}-\mathrm{x_2}A_2^nZ_0 }{n^{l+\frac 12}\rho^{n/2}}\stabto \sigma_l\sqrt W\cdot\mathcal{N}(0,1), \quad \text{as } n\to\infty.
\end{align*}
%In both cases above the limiting normal variable $\mathcal{N}(0,1)$ is independent of $W$.
	\end{enumerate}
\end{theorem}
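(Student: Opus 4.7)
The plan is to split the characteristic as $\Phi = \E[\Phi] + \Psi$ with $\Psi \defeq \Phi - \E[\Phi]$, giving $Z_n^\Phi = Z_n^{\E[\Phi]} + Z_n^\Psi$, and to analyze each piece separately before combining. For the deterministic part, I insert the spectral decomposition $I = \pi^{(1)} + \pi^{(2)} + \pi^{(3)}$ into $Z_n^{\E[\Phi]} = \sum_k \E[\Phi(k)] Z_{n-k}$ and use $\pi^{(i)} Z_m = A_i^m W_m^{(i)}$ for $i = 1,2$. Writing $W_{n-k}^{(1)} = W^{(1)} + (W_{n-k}^{(1)} - W^{(1)})$ in the $\pi^{(1)}$ piece produces the leading term $\mathrm{x}_1 A_1^n W^{(1)}$ plus a martingale-tail remainder $\sum_k \E[\Phi(k)] \pi^{(1)} A_1^{n-k}(W_{n-k}^{(1)} - W^{(1)})$ of order $\rho^{n/2}$ in $L^2$. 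The $\pi^{(2)}$ piece, after subtracting $\mathrm{x}_2 A_2^n Z_0 = \sum_k \E[\Phi(k)] \pi^{(2)} A_2^{n-k} \pi^{(2)} Z_0$, becomes $\sum_k \E[\Phi(k)] \pi^{(2)} A_2^{n-k}(W_{n-k}^{(2)} - \pi^{(2)} Z_0)$, a sum of martingale differences whose magnitude is governed by the Jordan structure on $\{|z| = \sqrt \rho\}$. The $\pi^{(3)}$ contribution is $o(\rho^{n/2})$ almost surely since all relevant eigenvalues have modulus $< \sqrt \rho$. Condition \eqref{eq:main-assum1} with $\vartheta < \sqrt \rho$ is exactly what ensures absolute convergence of these series in $k$ at the required rates.

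Conditionally on $\F_\infty$ (which determines $\T$ and all types), the centered part $Z_n^\Psi = \sum_u \Psi_u(n - |u|)\mathrm{e}_{\type(u)}$ is a sum of independent centered random variables, since the $\Psi_u$ are i.i.d.\ copies of $\Psi$ independent of $\F_\infty$. Its conditional variance equals $\sum_k \sum_{j=1}^J Z_k^j \Var[\Psi(n-k)\mathrm{e}_j]$, which by Theorem~\ref{thm:Kesten-Stigum} and condition \eqref{eq:main-assum2} is asymptotic to $\rho^n W \kappa^2$ with $\kappa^2 \defeq \sum_k \rho^{-k} \sum_j \mathrm{u}_j \Var[\Psi(k)\mathrm{e}_j]$. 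A conditional Lindeberg CLT then yields stable convergence $\rho^{-n/2} Z_n^\Psi \stabto \kappa \sqrt{W}\, G'$, with $G'$ standard normal and independent of $\F_\infty$.

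To combine the three fluctuation sources (the $\pi^{(1)}$ remainder, the $\pi^{(2)}$ martingale, and $Z_n^\Psi$), I would use the branching decomposition at an intermediate generation $k = k(n)$ with $k,\,n-k \to \infty$: conditionally on $\F_k$,
$$
Z_n^\Phi - \mathrm{x}_1 A_1^n W^{(1)} - \mathrm{x}_2 A_2^n Z_0 \;\approx\; \sum_{|u|=k} Y^{(u)}_{n-k},
$$
where the $Y^{(u)}_{n-k}$ are conditionally independent copies of the appropriately centered process, each starting from one particle of type $\type(u)$. Properly normalized, each summand has a Gaussian limit with variance depending only on $\type(u)$; summing over the $\approx W\mathrm{u}_j \rho^k$ particles of type $j$ at level $k$ produces the stable limit $\sigma \sqrt{W}\, G$ in case (i) or $\sigma_l \sqrt{W}\, G$ in case (ii). Independence of $G$ from $W$ follows because the CLT is applied conditionally on $\F_k$ (so $G$ arises from randomness strictly after generation $k$), while $W$ is already determined in the limit $k \to \infty$.

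The main obstacles are threefold. First, identifying the exact form of $\sigma_l^2$: expanding $A_2^n$ on each Jordan block at $\lambda$ with $|\lambda| = \sqrt \rho$ as $\sum_m \binom{n}{m}\lambda^{n-m}(A - \lambda I)^m$, the largest non-vanishing term is $\binom{n}{l}\lambda^{n-l}(A-\lambda I)^l$ of size $\approx n^l \rho^{n/2}/l!$, and the quadratic variation of the resulting martingale accumulates like $\sum_{m=0}^n (n-m)^{2l}\rho^{-m}\rho^{n} \approx n^{2l+1}\rho^n/(2l+1)$, yielding the precise prefactor $\rho^{-l}/((2l+1)(l!)^2)$ of \eqref{eq:definition of sigma_l}. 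Second, proving that the three fluctuation sources combine cleanly into a single Gaussian: in case (i) they must be treated jointly via a common martingale array (since they share the tree $\T$), while in case (ii) all contributions other than the dominant $\pi^{(2)}$ Jordan term are $o(n^{l+1/2}\rho^{n/2})$ and wash out. Third, controlling the truncation errors (tails in $k$ in the spectral sums, tails in the branching decomposition, and the $\pi^{(3)}$ contribution), where the strict inequality $\vartheta < \sqrt \rho$ in \eqref{eq:main-assum1} and the summability in \eqref{eq:main-assum2} are essential.
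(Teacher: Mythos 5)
There is a genuine gap, and it is concrete: your claim that ``the $\pi^{(3)}$ contribution is $o(\rho^{n/2})$ almost surely since all relevant eigenvalues have modulus $<\sqrt\rho$'' is false. Only the \emph{mean} of $Z_n^{\Phi\pi^{(3)}}$ decays like $\vartheta^n$; the random fluctuations of $\pi^{(3)}Z_m$ around $\pi^{(3)}A^mZ_0$ are of order exactly $\rho^{m/2}$ (this is precisely the content of the Kesten--Stigum CLT that the paper recovers in its final corollary, where for $\mathrm{a}\perp\mathrm{u}$ in a two-eigenvalue situation the whole limit comes from the $\pi^{(3)}$ projection). Consequently the $\pi^{(3)}$ part contributes at the $\rho^{n/2}$ scale and enters the limiting variance; in the paper this is the characteristic $\Psi_4^\star$ obtained from the star transform of $\Phi(k)\pi^{(3)}$, and dropping it would give a wrong $\sigma$ in case i). A second problem is your treatment of the centered part: conditioning on $\F_\infty$ and asserting that the $\Psi_u$ are independent of $\F_\infty$ contradicts the standing setup, in which $(L(u),\Phi_u)$ are i.i.d.\ copies of the \emph{pair} $(L,\Phi)$ and $\Phi$ may be a deterministic function of $L$; given $\F_\infty$ the $\Psi_u$ need not be centered or even random. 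This dependence is not a technicality: the paper's variance \eqref{eq:definition of sigma} is $\sum_k\rho^{-k}\Var[\Phi(k)+\Psi(k)]\mathrm{u}$, which contains cross-covariances between the characteristic randomness and the offspring randomness that a separate CLT for $Z_n^{\Phi-\E\Phi}$ plus tree-driven terms cannot produce unless the sources are merged into one array; your ``common martingale array'' and intermediate-generation decomposition are only gestured at, and the latter also has to cope with $W^{(1)}$ depending on the whole tree beyond generation $k(n)$.

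For comparison, the paper's route sidesteps both issues by a single reduction: every deterministic-characteristic fluctuation (the differences $Z_n^{\Phi\pi^{(i)}}-\mathrm{x}_i\pi^{(i)}Z_n$ for $i=1,2$, the martingale tail $\mathrm{x}_1A_1^n(W_n^{(1)}-W^{(1)})$ via Lemma~\ref{lem: difference of martingale and its limit}, and the $\pi^{(3)}$ part) is rewritten, through the star construction of Lemma~\ref{lem: reduction to centred}, as a branching process counted with a centered \emph{random} characteristic built from $L-A$; these are added to $\Phi-\E\Phi$ and the single martingale CLT (Theorem~\ref{thm: centred characteristic}, proved along a vertex-by-vertex exhaustion of $\U$) is applied once, which automatically accounts for all dependencies and yields the stable limit with the correct $\sigma$, while the critical $\pi^{(2)}$ term $\mathrm{x}_2\pi^{(2)}(Z_n-Z_0)$ is handled separately by Theorem~\ref{lem:critical martingale} (your Jordan-block computation of the prefactor in \eqref{eq:definition of sigma_l} does match that part). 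To repair your proposal you would need to restore the $\pi^{(3)}$ fluctuation as a contribution at scale $\rho^{n/2}$ and replace the conditioning on $\F_\infty$ by a joint martingale (or star-transform) argument that keeps the characteristic and offspring randomness together.
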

\begin{remark}
	The constant $\sigma$ can be explicitly computed and is given by \eqref{eq:definition of sigma}, even though the calculations are pretty involved.
\end{remark}
\begin{remark}
	Since $Z_n^\Phi, \mathrm{x}_1(\Phi)$ and $\mathrm{x}_2(\Phi)$ depend linearly on the characteristic $\Phi$, by the Cram\'er-Wold device we can immediately conclude that for different characteristics $\Phi_1,\dots,\Phi_k$ the convergence holds jointly. In particular, Theorem \ref{thm:main} can be easily extended to multidimensional $d>1$ characteristics. 
\end{remark} 

\section{Auxiliary results}\label{sec:aux-results}

We now build towards proving Theorem~\ref{thm:main}. For this, we first prove several  preliminary results that we finally put together in order to complete the main proof. The three projections $\pi^{(1)},\pi^{(2)},\pi^{(3)}$ and the martingales $W_n^{(1)}$ and $W_n^{(2)}$ will be investigated separately.
While the main result claims limit theorems for general  random characteristics $\Phi$,  our proof will be dealt with in the following increasing order of difficulty:
\begin{enumerate}[i)]
\setlength\itemsep{0em}
\item Leading order of $Z^\Phi_n$, when $\Phi$ is a deterministic characteristic.
\item Limit theorem of $Z^\Phi_n$, when  $\Phi$ is a random and centered characteristic.
\item Reduction of $Z^\Phi_n$  to the previous case, when $\Phi$ is a deterministic characteristic with small growth of $\E Z^\Phi_n$.
\end{enumerate}

\subsection{Limit theorem for deterministic characteristics}

For a deterministic function $\Phi:\Z\to\C^{1\times J}$, the next result gives a strong law of large numbers for $(Z_n^{\Phi})_{n\in\N}$, and this can be seen as discrete time analogue of \cite[Theorem 7.2, Theorem 7.3]{jagers}, with slightly stronger assumptions on the characteristic. While the proof is an easy exercise that can be deduced from \cite{jagers,iksanov-meiners}, we still include it here for sake of completeness, since we are not aware of such a result in the context of discrete time, multitype CMJ processes counted with some characteristic $\Phi$.
%then with the help of Theorem \ref{thm:Kesten-Stigum} (cf. \cite{jagers,iksanov-meiners}) we obtain first order asymptotics for $Z_n^\Phi$. 
For a deterministic characteristic $\Phi$, \eqref{eq:main-assum2} is trivially satisfied.
\begin{proposition}\label{prop:appl-kesten}
Under assumptions $(GW1)-(GW3)$, let $\Phi:\Z\to\C^{1\times J}$ be a  deterministic characteristic and $(Z_n^{\Phi})_{n\in\N}$ the associated branching process counted with characteristic $\Phi$. If $W$ is the  scalar random variable from Kesten-Stigum theorem, then the following holds.
\begin{enumerate}[i)]
\setlength\itemsep{0em}
\item If $\sum_{k\in \Z}\|\Phi(k)\|\rho^{-k}<\infty$, then for $\sigma^2_\Phi=\sum_{k\in \Z}\Phi(k)\rho^{-k}\mathrm{u}$ we have
\begin{align}\label{eq: nerman's type convergence}
\lim_{n\to\infty}\frac{Z^\Phi_n}{\rho^{n}}=\sigma^2_\Phi W\quad\textit{almost surely}.
\end{align}
\item If for some $\gamma\ge0$ and some  $\sigma^2\in\C$ the following three conditions hold: 
\begin{align*}
\sup_{n\ge 1} \frac{1}{n^{\gamma}}\sum_{k\in \Z}\|\Phi(k)\|\rho^{-k}<\infty,
\end{align*}
and 
\begin{align*}			
\lim_{n\to\infty}\frac{1}{n^{\gamma}\rho^{n}}\|\Phi(n)\|= 0\quad \text{and}\quad 
\lim_{n\to\infty}\frac{1}{n^{\gamma}}\sum_{k=-\infty}^n\Phi(k)\rho^{-k}\mathrm{u}=\sigma^2,
\end{align*}
then  
\begin{align}
\label{eq: nerman's type convergence II}
\lim_{n\to\infty}\frac{Z^\Phi_n}{n^{\gamma}\rho^{n}}=\sigma^2W\quad\textit{almost surely.}
\end{align}
\end{enumerate}
\end{proposition}
\begin{proof}
{\it i).} Since
$$\frac{1}{\rho^{n}}Z^\Phi_n=\sum_{k=-\infty}^n\Phi(k)\rho^{-k}\rho^{-n+k}Z_{n-k},$$
together with Kesten-Stigum theorem and dominated convergence, 
we obtain \eqref{eq: nerman's type convergence}.

{\it ii).} In order to prove {\it ii)}, fix $\epsilon>0$. Then, again by Kesten-Stigum theorem  there exists (random) $k_0$ such that for any $k\ge k_0$, $\|W\mathrm{u}-\rho^{-k}Z_k\|<\epsilon$. Then
	\begin{align*}
	\limsup_{n\to \infty}&\|n^{-\gamma}\rho^{-n}Z^\Phi_n-\sigma^2 W\|
	=\limsup_{n\to \infty}\Big\|n^{-\gamma}\sum_{k=-\infty}^n\Phi(k)\rho^{-k}\big(\rho^{-n+k}Z_{n-k}-W\mathrm{u}\big)\Big\|\\
	&=\limsup_{n\to \infty}\Big\|n^{-\gamma}\sum_{k=-\infty}^{n-k_0}\Phi(k)\rho^{-k}\big(\rho^{-n+k}Z_{n-k}-W\mathrm{u}\big)\Big\|\\
	&\quad+\limsup_{n\to \infty}\Big\|n^{-\gamma}\sum_{k=n-k_0+1}^{ n}\Phi(k)\rho^{-k}\big(\rho^{-n+k}Z_{n-k}-W\mathrm{u}\big)\big\|\\
	&\le \epsilon\sup_{n\ge 1}\big( n^{-\gamma}\sum_{k=-\infty}^n\|\Phi(k)\|\rho^{-k}\big)
+\left(\|Z_{k_0}\|+W\|\mathrm{u}\|\right) \limsup_{n\to \infty}n^{-\gamma}\sum_{k=n-k_0+1}^ n\|\Phi(k)\|\rho^{-k}\\
	&= \epsilon\sup_{n\ge 1}\big( n^{-\gamma}\sum_{k\le n}\|\Phi(k)\|\rho^{-k}\big),
	\end{align*}
and the last term above is finite by assumption, so this 	proves \eqref{eq: nerman's type convergence II}, since $\epsilon$ was arbitrary small. 
\end{proof}

\subsection{Central limit theorem for centered characteristics}

We call a random characteristic $\Phi:\Z\to \C^{1 \times J}$ {\it centered} if for any $k\in\Z$ it holds $\E[\Phi(k)]=(0\ 0\ldots0):=\mathbf{0}\in \R^{1\times J}$. The next result gives a central limit theorem for branching processes $(Z_n^{\Phi})_{n\in \N}$ counted with a centered characteristic $\Phi$.
\begin{theorem}\label{thm: centred characteristic}
Assume $(GW1)-(GW3)$ hold and let $\Phi:\Z\to\C^{1\times J}$ be a  centered random characteristic of dimension one with associated branching process $(Z_n^{\Phi})_{n\in\N}$. For a standard normal random variable  $\mathcal{N}(0,1)$  independent of $W$, the following holds.
\vspace{-0.25cm}
\begin{enumerate}[i)]
\setlength\itemsep{0em}
\item If 
$\sum_{k\in\Z}\|\Var[\Phi(k)]\|\rho^{-k}<\infty$, then for $\sigma^2_\Phi= \sum_{k\in \Z}\Var\left[\Phi(k)\right]\rho^{-k}\mathrm{u}$ we have
$$\frac{Z^\Phi_n}{\rho^{n/2}}\stabto \sigma_\Phi \sqrt W\cdot\mathcal{N}(0,1),\quad \text{as }n\to\infty.$$
Moreover,  $\sigma_\Phi=0$ if and only if $Z^\Phi_n=0$ for all $n\in\N$.
\item If for some constants $\gamma\ge0,\sigma>0$ the following two conditions hold:
\begin{align*}
&\sup_{n\ge 1}n^{-\gamma}\sum_{k=-\infty}^n\|\Var[\Phi(k)]\|\rho^{-k}<\infty\\
&\lim_{n\to\infty}n^{-\gamma}\sum_{k=-\infty}^n\Var[\Phi(k)]\rho^{-k}\mathrm{u}=\sigma^2,
\end{align*}
then
$$\frac{Z^\Phi_n}{n^{\gamma/2}\rho^{n/2}}\stabto\sigma\sqrt W\cdot\mathcal{N}(0,1),\quad \text{as }n\to\infty.$$
\end{enumerate}
\end{theorem}
\begin{proof}
For simplicity of notation, we write $V_\Phi$ for the variance of $\Phi$ during this proof, that is for $k\in \Z$, $V_{\Phi}(k)\defeq \Var[\Phi(k)]$  and for any $(k,i)\in\Z\times [J]$ we define the deterministic characteristic $\Psi:\Z\to \C^{1\times J}$ by
	$$\Psi(k)\mathrm{e}_j\defeq\E[(\Phi(k)\mathrm{e}_j)^2].$$
We consider an increasing sequence $(G_n)_{n\in\N}$ of subsets of $\U$ with the following property:
$\cup_{n\ge 1} G_n=\U$, for any $n\in\N$, $|G_n|=n$ and finally
if $u\in G_n$ then for any $v\le u$, $v\in G_n$.
Such a sequence can be constructed using the diagonal method. By $v_n$ we denote the unique vertex in $G_n\setminus G_{n-1}$ and we let $\cG_n\defeq\sigma(\{L(u):u\in G_n\})$. We define a normalizing sequence $(r_n)_{n\ge1}$ by
	\begin{align*}
	r_n\defeq
	\begin{cases*}
	\rho^{n/2}&\quad\textit{in case i)}\\
	n^{\gamma/2}\rho^{n/2} &\quad\textit{in case ii)}
	\end{cases*}
	\end{align*}	
For any $n\in\N$, the process $(M_k(n))_{k\ge1}$ defined by 
$$M_k(n)\defeq r_n^{-1}\sum_{u \in G_k} \Phi_u(n-|u|)\mathrm{e}_{\type(u)}$$
is a $\cG_k$--martingale. Indeed, for any $u\in G_k$ both $\type(u)$ and $\Phi_u$ are $\cG_k$--measurable  and so is $M_k(n)$. The martingale property then follows since $\type(v_{k+1})$ is $\cG_k$--measurable and $\Phi_{v_{k+1}}$ is $\sigma\left(L(v_{k+1})\right)$--measurable, whereas $\Phi_{v_{k+1}}$ is independent of $\cG_k$, hence
$$\E\left[\Phi_{v_{k+1}}(n-|v_{k+1}|)\mathrm{e}_{\type(v_{k+1})}|\cG_k\right]=0,$$
since $\Phi$ is a centered characteristic by assumption. Next, using again the fact that $\Phi$ is centered and for $u\neq v$, $\Phi_u$ and $\Phi_v$ are independent, together with the assumptions we get
\begin{align*}
\E\left[M_k(n)^2\right]=r_n^{-2}\E\Big[\sum_{u\in G_k}\E[(\Phi_u(n-|u|)\mathrm{e}_{\type(u)})^2]\Big]\leq r_n^{-2} \E\big[Z_n^{\Psi}\big]\leq C,
\end{align*}
for some constant $C\geq 0$ independent of $n$ and $k$, so the martingale $(M_k(n))_k$ is bounded in $\L^2$ and hence it  converges almost surely and in $\L^2$ to some limit random variable $M(n)$.	
Let now $(n_k)_{k\in \N}$ be  an increasing sequence in $\N$. 
Then there exists an increasing subsequence $(i_k)_{k\in \N}$ such that $\E[(M(n_k)-M_{i_k}(n_k))^2]\le 2^{-k}$ and hence  $M(n_k)-M_{i_k}(n_k)$ converges to 0 almost surely as $k\to\infty$. In view of the martingale central limit theorem   \cite[Corollary 3.1 on p.~58]{Hall+Heyde:1980} it suffices
to verify in case \textit{i)}, with $r_n=\rho^{n/2}$ that 
\begin{align}
\label{eq:mgale CLT1}
&r_{n_k}^{-2}\sum_{i=1}^{i_k}\E\Big[\left( \Phi_{v_i}(n_k-|v_i|)\mathrm{e}_{\type(v_i)}\right)^2\Big|{\cG_{i-1}}\Big]\Probto
\sigma^2_\Phi W,\quad k\to\infty\\
\label{eq:mgale CLT2}
&r_{n_k}^{-2}\sum_{i=1}^{i_k}\E\Big[ \left(\Phi_{v_i}(n_k-|v_i|)\mathrm{e}_{\type(v_i)}\right)^2\ind{|\Phi_{v_i}(n_k-|v_i|)\mathrm{e}_{\type(v_i)}|>\delta r_{n_k}}\Big|\cG_{i-1}\Big]
\Probto 0,\quad k\to\infty
\end{align}
for every $\delta>0$, while in case {\it ii)} the same two convergence results should hold for $r_n=n^{\gamma/2} \rho^{n/2}$, with $\sigma_{\Phi}^2$ being replaced with $\sigma^2$ in equation \eqref{eq:mgale CLT1}. Observe first that
%	, and
\begin{equation*}
r_{n_k}^{-2}\E \bigg[\sum_{i=i_k+1}^{\infty} \E\Big[\left( \Phi_{v_i}(n_k-|v_i|)\mathrm{e}_{\type(v_i)}\right)^2\big|{\cG_{i-1}}\Big]\bigg]<\E\big[(M(n_k)-M_{i_k}(n_k))^2\big]\leq 2^{-k},
\end{equation*}
and 
\begin{align*}
r_{n_k}^{-2}\sum_{i=1}^{i_k}\E\big[ \left(\Phi_{v_i}(n_k-|v_i|)\mathrm{e}_{\type(v_i)}\right)^2\big|&{\cG_{i-1}}\big]=r_{n_k}^{-2}\sum_{i=1}^{i_k} \Psi(n_k-|v_i|)\mathrm{e}_{\type(v_i)},
\end{align*}
hence \eqref{eq:mgale CLT1} reduces to proving that
$$r_{n_k}^{-2}Z_{n_k}^{\Psi}=r_{n_k}^{-2}Z_{n_k}^{\Var[\Phi]}\Probto
\sigma^2_\Phi W,\quad \text{ as } k\to\infty$$
in case {\it i)}, with $\sigma_{\Phi}^2$ being replaced with $\sigma^2$ in case \textit{ii)}.
Applying Proposition \ref{prop:appl-kesten} {\it i)}	
to the one dimensional, deterministic characteristic $\Var[\Phi(k)]$, for 
$\sigma^2_\Phi=\sum_{k\in \Z}\Var[\Phi(k)]\rho^{-k}\mathrm{u}$ equation \eqref{eq: nerman's type convergence} yields
$$\rho^{-n_k}Z_{n_k}^{\Var[\Phi]} \to \sigma^2_\Phi W ,\quad\text{almost surely as }k\to\infty,$$
and this proves \eqref{eq:mgale CLT1} in case {\it i)} of the claim.
On the other hand, Proposition \ref{prop:appl-kesten} {\it ii)} applied to the same characteristic $\Var[\Phi(k)]$ and
$r_n=n^{\gamma/2}\rho^{n/2}$, together with \eqref{eq: nerman's type convergence II} yields for \\
$\lim_{n\to\infty}n^{-\gamma}\sum_{k=-\infty}^n\Var[\Phi(k)]\rho^{-k}\mathrm{u}=\sigma^2$ that
$$\rho^{-n_k}n_k^{-\gamma}Z_{n_k}^{\Var[\Phi]} \to \sigma^2 W ,\quad\text{almost surely as }k\to\infty,$$
thus proving \eqref{eq:mgale CLT1} in case {\it ii)} of the claim.	

In order to prove  \eqref{eq:mgale CLT2}, for any $T>0$, we consider the truncated characteristic
	$$\Psi^T(n)\mathrm{e}_i\defeq\E\big[(\Phi(n)\mathrm{e}_i)^2\ind{|\Phi(n)\mathrm{e}_i|>T}\big]$$ and observe that $\Psi^T(n)\mathrm{e}_i\le \Psi(n)\mathrm{e}_i,$
and $V_{\Phi}^T(n):=(\Psi^T(n)\mathrm{e}_1,\ldots\Psi^T(n)\mathrm{e}_J))$. Therefore, in case {\it i)} for $k$ large enough
we have 
\begin{align*}
&\limsup_{k\to\infty} \rho^{-n_k}\sum_{i=1}^{i_k}\E\big[ \left(\Phi_{v_i}(n_k-|v_i|)\mathrm{e}_{\type(v_i)}\right)^2\ind{|\Phi_{v_i}(n_k-|v_i|)\mathrm{e}_{\type(v_i)}|>\delta\rho^{n_k/2}}\big|\cG_{{i-1}}\big]\\ 
&\le \limsup_{k\to\infty}\rho^{-n_k}\sum_{i=1}^{\infty} \Psi^T(n_k-|v_i|)\mathrm{e}_{\type(v_i)}
=\limsup_{k\to \infty}\rho^{-n_k}Z_{n_k}^{\Psi^T}\to W \sum_{k\in \Z}V^T_{\Phi}(k)\rho^{-k}\mathrm{u},
\end{align*}
almost surely as $k\to\infty$ in view of Proposition \ref{prop:appl-kesten} {\it i)} (equation  \eqref{eq: nerman's type convergence}), applied to the one dimensional deterministic characteristic $V_{\Phi}^T$. As the above holds for any $T$, taking now the limit as $T$ goes to infinity, by dominated convergence theorem we infer \eqref{eq:mgale CLT2} in case {\it i)}. The proof of  \eqref{eq:mgale CLT2} in case {\it ii)} goes exactly as in case {\it i)}, with the only difference that in the last line of the equation above  $\sum_{k\in \Z}V^T_{\Phi}(k)\rho^{-k}\mathrm{u}$ is replaced with $\lim_{n\to\infty}n^{-\gamma}\sum_{k\leq n} V_{\Phi}^T(k)\rho^{-k}\mathrm{u}$, which exists in view of the assumptions. Thus \eqref{eq:mgale CLT2} holds both in case {\it i)} and {\it ii)} and this completes the proof.	
%	Finally, to prove \eqref{eq:mgale CLT3} observe 
%	that	\begin{align*}
%	\sup_k r_{n_k}^{-2}&\E \Big[\max_{i\le i_k}\left(\Phi_{v_i}(n_k-|v_i|)\mathrm{e}_{\type(v_i)}\right)^2\Big]
%	\le \sup_k r_{n_k}^{-2}\E \Big[\sum_{u\in\U} \left(\Phi_{v_i}(n_k-|v_i|)\mathrm{e}_{\type(v_i)}\right)^2\Big]\\
%	&=\sup_k r_{n_k}^{-2}\E \Big[\sum_{u\in\U} \Phi(n_k-|u|)\mathrm{e}_{\type(u)}\Big]
%	=\sup_k r_{n_k}^{-2}\sum_{l=0}^{\infty} \E \Big[V_{\Phi}(n_k-l)Z_l\Big]\\
%	&=\sup_kr_{n_k}^{-2} \sum_{l=0}^{\infty}\big<V_{\Phi}(n_k-l),A^lZ_0\big>
%	\le\sup_kr_{n_k}^{-2} \sum_{l=0}^{\infty}\rho^{l}\|V_{\Phi}(n_k-l)\|\cdot\|Z_0\|\\
%	&= \begin{cases}
%	\sum_{l=-\infty}^{\infty}\rho^{-l}\|V_{\Phi}(l)\|&\textit{in case i)}\\
%	n^{-\gamma}\sum_{l=-\infty}^{n_k}\rho^{-l}\|V_{\Phi}(l)\|\quad&\textit{in case ii).}
%	\end{cases}
%	\end{align*}
%	In both cases, these expressions are finite and this  proves \eqref{eq:mgale CLT3}.
\end{proof}

\subsection{From deterministic to random centered characteristics}

\textbf{Star construction.} Once we have proved the limit theorems for centered characteristics in Theorem \ref{thm: centred characteristic}, the  general non-centered case will be reduced to the centered case in the following way: for a deterministic characteristic $\Phi$ with a moderate growth rate of $\E Z^\Phi_n$, we apply a {\it star transformation} that we define below,  in order to obtain a centered random characteristic $\Phi^{\star}$, for which we can use Theorem \ref{thm: centred characteristic}. 

%\textcolor{blue}{To write a paragraph here about why is the star characteristic well defined, to explain about the conditional
%convergence that you explained to me in our last Skype meeting!}
For a deterministic function $\Phi:\Z\to\C^{d\times J}$ of dimension $d$ that satisfies  $\sum_{k= 0}^{\infty}\|\Phi(-k)\|\rho^k<\infty$, we define a random characteristic  $\Phi^\star:\Z\to\C^{d\times J}$ by
setting
 \begin{align}
\label{eq:definition of phi hat-gen}
\Phi^{\star}(k)\defeq\sum_{l=0}^{\infty}\Phi(k-1-l)A^{l}(L-A),
\end{align}
and similarly, for any $u\in\U$ 
 \begin{align}
\label{eq:definition of phi hat}
\Phi^{\star}_u(k)=\sum_{l=0}^{\infty}\Phi(k-1-l)A^{l}(L(u)-A),
\end{align}
and recalling that $(L(u))_{u\in \U}$ is an i.i.d copy of the offspring distribution matrix $L$, indexed over the vertices of  $ \U$.
% So the randomness and the dependence on the probability space in the star transformation comes from the random matrix $L$.
In the next result we gather several  properties of the star characteristic $\Phi^{\star}$.
\begin{lemma}
	\label{lem: reduction to centred}
Assume $(GW1)-(GW3)$ hold and let $\Phi:\Z\to\C^{d\times J }$ be a deterministic characteristic that satisfies  $\sum_{k=0}^{\infty}\|\Phi(-k)\|\rho^{k}<\infty$. Then for any $u\in \U$, the characteristic $\Phi^{\star}_u$ given by \eqref{eq:definition of phi hat} is   well-defined and the following hold.
\vspace{-0.25cm}
\begin{enumerate}[i)]
\setlength\itemsep{0em}
\item The random variables $((L(u),\Phi^{\star}_u))_{u\in\U}$ are independent and identically distributed.
\item For any $u\in \U$ and $k\in \Z$, we have $\E\left[\Phi^{\star}_u(k)\right]=0$.
\item If $\E[\|L\|^2]<\infty$ and  for any initial type $i_0\in [J]$ we have 
$$\sum_{k\in \Z}\big\|\E \left[Z_k^\Phi\big| Z_0={\mathrm e}_{i_0}|\right]\big\|^2\rho^{-k}<\infty$$ 
then it holds
$$\sum_{k\in \Z}\E\left[\|\Phi^{\star}(k)\|^2\right]\rho^{-k}<\infty.$$
	\item For any $n\in \N$, $Z^{\Phi^{\star}}_n$ is a re-centering of $Z^{\Phi}_n$: if for any initial type $i_0\in [J]$, it holds 
	$$\sum_{k=0}^{\infty}\big\|\E \left[Z_k^\Phi\right]\big\|\rho^{-k}
	<\infty$$
	then $Z^{\Phi^\star}_n$ is well-defined and can be written as
	$$Z^{\Phi^{\star}}_n=Z^\Phi_n-\E Z^\Phi_n.$$
\end{enumerate}
\end{lemma}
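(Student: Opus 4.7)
The plan is to recognize that $\Phi^{\star}_u(k)$ is a linear image of the centred offspring matrix $L(u)-A$ under a deterministic operator that is essentially the mean of $Z_n^{\Phi}$. Concretely, set $f(n) \defeq \sum_{l=0}^{\infty}\Phi(n-l)A^{l}$, so that $\E[Z_n^{\Phi}\mid Z_0=\mathrm{e}_i]=f(n)\mathrm{e}_i$ for each $i\in[J]$. Then \eqref{eq:definition of phi hat} rewrites compactly as $\Phi^{\star}_u(k)=f(k-1)(L(u)-A)$, and all four assertions flow from this identity. Well-definedness of $f(n)$, and hence of $\Phi^{\star}_u(k)$, follows from $\sum_{k\ge 0}\|\Phi(-k)\|\rho^{k}<\infty$ combined with the Jordan-type bound $\|A^{l}\|=O(l^{J-1}\rho^{l})$ implicit in the spectral decomposition of Section~\ref{sec:prelim}.

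Parts (i) and (ii) are then immediate. For (i), each $\Phi^{\star}_u$ is a deterministic function of $L(u)$, so the pairs $(L(u),\Phi^{\star}_u)$ inherit the i.i.d.\ property of $(L(u))_{u\in\U}$. For (ii), dominated convergence allows expectation to enter the defining series, and $\E[L-A]=0$ yields $\E[\Phi^{\star}_u(k)]=0$. For (iii), submultiplicativity of the Hilbert--Schmidt norm gives
\[
\E\bigl[\|\Phi^{\star}(k)\|^{2}\bigr]\le\|f(k-1)\|^{2}\,\E\bigl[\|L-A\|^{2}\bigr],
\]
where the second factor is finite by $\E[\|L\|^{2}]<\infty$. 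Since $\|f(n)\|^{2}=\sum_{i}\|\E[Z_n^{\Phi}\mid Z_0=\mathrm{e}_i]\|^{2}$, the summability hypothesis of (iii) translates (after an index shift) directly into $\sum_{k\in\Z}\E[\|\Phi^{\star}(k)\|^{2}]\rho^{-k}<\infty$.

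The substantive content lies in (iv), and I expect this to be the main obstacle. Setting $C_u\defeq (L(u)-A)\mathrm{e}_{\type(u)}$ for the centred offspring vector of $u$, I would expand
\[
Z_n^{\Phi^{\star}}=\sum_{u\in\T}\sum_{l=0}^{\infty}\Phi(n-|u|-1-l)A^{l}C_u=\sum_{m=1}^{\infty}\Phi(n-m)\sum_{u:\,|u|<m}A^{m-1-|u|}C_u,
\]
via the change of index $m=|u|+1+l$ and a Fubini swap, the latter justified by the assumption $\sum_{k\ge 0}\|\E Z_k^{\Phi}\|\rho^{-k}<\infty$ of (iv) which controls the absolute series. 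The decisive step is the telescoping identity
\[
\sum_{u:\,|u|<m}A^{m-1-|u|}C_u=\sum_{j=0}^{m-1}A^{m-1-j}(Z_{j+1}-AZ_j)=Z_m-A^{m}Z_0,
\]
whose first equality uses $Z_{j+1}-AZ_j=\sum_{|u|=j}C_u$ (the very definition of multitype reproduction) and whose second is a straightforward telescoping. Substituting back and observing that the $m=0$ contribution $\Phi(n)(Z_0-Z_0)$ vanishes produces $Z_n^{\Phi^{\star}}=\sum_{m\ge 0}\Phi(n-m)(Z_m-A^{m}Z_0)=Z_n^{\Phi}-\E Z_n^{\Phi}$, as claimed.
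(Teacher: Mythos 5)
Your proof is correct and follows essentially the same route as the paper: parts (i)--(iii) are argued identically, and your part (iv) --- reindexing the double sum by $m=|u|+1+l$ and telescoping $\sum_{j=0}^{m-1}A^{m-1-j}(Z_{j+1}-AZ_j)=Z_m-A^{m}Z_0$ --- is just a repackaging of the paper's index-shift-and-cancel computation, both resting on the identity $Z_{j+1}-AZ_j=\sum_{|u|=j}(L(u)-A)\mathrm{e}_{\type(u)}$ and on the summability hypothesis to justify regrouping the series. One small repair: the bound $\|A^{l}\|=O(l^{J-1}\rho^{l})$ you invoke does not by itself give well-definedness from $\sum_{k\ge0}\|\Phi(-k)\|\rho^{k}<\infty$ (the polynomial factor is not absorbed); instead use that positive regularity (GW2) makes $\rho$ a simple, strictly dominant eigenvalue, so in fact $\|A^{l}\|=O(\rho^{l})$, after which your estimate goes through.
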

\begin{proof}
The fact that  $\Phi_u$ is $L(u)$-measurable implies \textit{i)}. By definition, \textit{ii)} is also obvious since $A=\E[L(u)]$ for every $u\in \U$.

For  \textit{iii)}   note that 
	\begin{align*}
	\big\|\sum_{l\ge 0}\Phi(k-l)A^l\big\|^2
	=\sum_{j=1}^J\left\|\E\big[Z^\Phi_k\big|Z_0=\mathrm{e}_j\big]\right\|^2.
	\end{align*}
We have
\begin{align*}
	\E\big[\|\Phi^\star(k)\|^2\big]=\E\Big[\big\|\sum_{l= 0}^{\infty}\Phi(k-1-l)A^{l}(L-A)\big\|^2\Big]
	\le\big\|\sum_{l= 0}^{\infty}\Phi(k-1-l)A^{l}\big\|^2\E\big[\|L-A\|^2\big],
	\end{align*}
	and therefore 
	\begin{align*}
	\sum_{k\in \Z}\E\left[\|\Phi^{\star}(k)\|^2\right]\rho^{-k}\le\E\big[\|L-A\|^2\big]\rho
	\sum_{k\in \Z}\rho^{-k-1}\sum_{j=1}^J\big\|\E\big[Z^\Phi_{k-1}\big|Z_0=\mathrm{e}_j\big]\big\|^2
	 <\infty,
	\end{align*}
which finally proves	\textit{iii)}. 	

In order to prove \textit{iv)}, remark first that for the random characteristic $\Psi:\Z\to\C^{d\times J}$ defined as 
$\Psi(k)\defeq \ind{k=0}L$ it holds $Z^\Psi_n=Z_{n+1}$. Thus 
	$$Z^{\Phi^\star}_n=\sum_{k= 0}^{\infty}\sum_{l= 0}^{\infty}\Phi(n-k-1-l)A^{l}(Z_{k+1}-AZ_{k})$$
	and our assumption enables us to split the above sum as
	\begin{align*}
	Z^{\Phi^\star}_n
	&=\sum_{k=1}^{\infty}\sum_{l= 0}^{\infty}\Phi(n-k-l)A^{l}Z_{k}-\sum_{k= 0}^{\infty}\sum_{l= 1}^{\infty}\Phi(n-k-l)A^{l}Z_{k}\\
	&=\sum_{k= 1}^{\infty}\Phi(n-k)Z_{k}-\sum_{l= 1}^{\infty}\Phi(n-l)A^{l}Z_0
	=Z^{\Phi}_n-\E\big[Z^{\Phi}_n\big]
	\end{align*}
and this proves the last claim.
\end{proof}

\subsection{Associated martingales}\label{sec:martingales}

The aim of this section is to investigate the  martingales $W_n^{(1)}$ and $W_n^{(2)}$ and to prove the convergence of $W_n^{(1)}$ in  Lemma~\ref{lem:convergence of martingales}.
\begin{proof}[Proof of Lemma \ref{lem:convergence of martingales}]
	It suffices to show that $\E\big[\big\|W^{(1)}_{n+1}-W^{(1)}_{n}\big\|^2\big]$ decays exponentially in $n$. For any   $\delta >0$  such that  $\sigma_A\cap\left\{z\in \C:\rho^{1/2}<|z|\le \rho^{(1+\delta)/2}\right\}=\emptyset$, there exists  a constant $C>0$ such that for any $n\in\N$, $\|A_1^{-n}\|^2\le C\rho^{-(1+\delta)n}$. We have
	
	\begin{align*}
	\E\big[\big\|W^{(1)}_{n+1}-W^{(1)}_{n}\big\|^2\big|\F_n\big]&=
	\E\Big[\big\|A_1^{-n-1}\pi^{(1)} \sum_{\mathclap{u\in \T,|u|=n}}L(u)\mathrm{e}_{\type(u)}-A_1^{-n-1}\pi^{(1)} \sum_{\mathclap{u\in \T,|u|=n}}A\mathrm{e}_{\type(u)}\big\|^2\big|\F_n\Big]\\
	&= \E\Big[\big\|A_1^{-n-1}\pi^{(1)} \sum_{|u|=n}(L(u)-A)\mathrm{e}_{\type(u)}\big\|^2\big|\F_n\Big]\\
	&=\sum_{u\in \T,|u|=n}\E\Big[\big\|A_1^{-n-1}\pi^{(1)} (L(u)-A)\mathrm{e}_{\type(u)}\big\|^2\big|\F_n\Big]\\
	&\le \sum_{u\in \T,|u|=n}C\rho^{-(1+\delta)n}\E\big[\big\|L-A\big\|^2\big]<\infty,
	\end{align*}
where the convergence of the last series follows from $A=\E[L]$ together with assumption (GW3)
and $\E\big[\big\|L-A\big\|^2\big]=\sum_{i,j\in [J]}\Var\big[L^{(i,j)}\big]$.
Thus, by taking expectations in the above conditional expectation and using the fact that the expected number of particles in the $n$-th generation is $\big<{\bf 1}, A^nZ_0\big>$, we obtain
	\begin{align*}
	\E\Big[\big\|W^{(1)}_{n+1}-W^{(1)}_{n}\big\|^2\Big]\le C'\rho^{-(1+\delta)n}\|A^nZ_0\|\le C'\rho^{-\delta n}
	\end{align*}
	which implies that 
	$$\sum_{n\geq 1}\E\Big[\big\|W^{(1)}_{n+1}-W^{(1)}_{n}\big\|^2\Big]<\infty,$$
	and this proves the claim.
\end{proof}
The next step in our approach is to express the martingale $W_n^{(1)}$ and its $\L^2$-limit  $W^{(1)}$ in terms of a branching process counted with some characteristic $\Phi^1$, that we describe below.
\begin{lemma}
	\label{lem: difference of martingale and its limit}
Assume (GW1)-(GW3) hold. Then for any $n\in\N$ we have
$$A_1^{n}\big(W^{(1)}-W^{(1)}_{n}\big)=Z^{\Phi^{1}}_n,$$
where the random characteristic $\Phi^{1}:\Z\to\C^{1\times J}$ is given by:  for $k\in \Z$
$$\Phi^{1}(k)\defeq A_1^{k-1}\pi^{(1)} (L-A)\ind{k\le 0}.$$
\end{lemma}
\begin{proof}
By telescoping, for any $n\in\N$ we obtain
\begin{align*}
	W^{(1)}-W^{(1)}_{n}&=\sum_{k= n}^{\infty}\big(W^{(1)}_{k+1}-W^{(1)}_{k}\big)
	=\sum_{k= n}^{\infty}A_1^{-k-1}\pi^{(1)} \sum_{u\in \T,|u|=k}(L(u)-A)\mathrm{e}_{\type(u)}\\
	&=\sum_{k=n}^{\infty}A_1^{-k-1}\pi^{(1)} \sum_{u\in\T}(L(u)-A)\mathrm{e}_{\type(u)}\ind{k-|u|=0}
	\end{align*}
which in turn equals
\begin{align*}	
	&=\sum_{u\in\T}A_1^{-|u|-1}\pi^{(1)} (L(u)-A)\mathrm{e}_{\type(u)}\ind{n-|u|\le 0}\\
	&=A_1^{-n}\sum_{u\in\T}A_1^{n-|u|-1}\pi^{(1)} (L(u)-A)\mathrm{e}_{\type(u)}\ind{n-|u|\le 0}\\
	&=A_1^{-n}\sum_{u\in\T}\Phi^{1}_u(n-|u|)\mathrm{e}_{\type(u)}=A_1^{-n}Z_n^{\Phi^1},
	\end{align*}
which, together with the definition of a branching process counted with a characteristic, proves the claim.	
\end{proof}

Now we switch to the analysis of $W^{(2)}$.
Let  $\mathrm{x}\in \C^{1\times J}$ be an arbitrary fixed (deterministic) row vector.  In order to understand the limit behavior of $\mathrm{x}\pi^{(2)} Z_n$ we need a  detailed analysis of the spectral decomposition of the mean offspring matrix $A$ on the critical circle $\{z\in \C:\ |z|=\sqrt\rho\}$ of radius $\sqrt{\rho}$.
If there are no eigenvalues of $A$ with absolute value equal to $\sqrt{\rho}$, then $\pi^{(2)}$ is the zero matrix, and the projection $\pi^{(2)}Z_n$ is the zero vector, so everything is trivially zero. 

\textbf{Assumption: } For the rest of this section, we assume that the matrix $A$ has at least one eigenvalue with absolute value equal to $\sqrt{\rho}$, that is, $\sigma^2_A\neq\emptyset$.
%Recall that $V^{(2)}=\oplus_{\lambda\in\sigma^2_A}V_{\lambda}$
%where $V_\lambda$ is the invariant space corresponding to the eigenvalue $\lambda$. 

\begin{theorem}
	\label{lem:critical martingale} Assume (GW1)--(GW3) hold and for any $\mathrm{x}\in \C^{1\times J}$ and $0\le l\le J-1$, consider  $\sigma_l^2$ as in \eqref{eq:definition of sigma_l}.
If $\sigma_l=0 $ for all $0\le l\le J-1$ then $\mathrm{x}\pi^{(2)} Z_n=\mathrm{x}\pi^{(2)}A^nZ_0$. Otherwise, let $l$ be maximal such that $\sigma_l\neq 0$. Then the following stable convergence holds
	\begin{align*}
	\frac{\mathrm{x}\pi^{(2)} (Z_n-A^nZ_0)}{n^{l+\frac12}\rho^{n/2}}\stabto\sigma_{l}\sqrt W\cdot\mathcal{N}(0,1)\qquad\text{as } n\to\infty,
	\end{align*}
	where $\mathcal{N}(0,1)$ is a standard normal variable independent of $W$.
\end{theorem}
\begin{proof}
We decompose $\pi^{(2)}A=D+N$ in its semisimple part $D=\sum_{\lambda\in\sigma^2_A}\lambda\pi_\lambda$ and its nilpotent part $N=\sum_{\lambda\in\sigma^2_A}(A-\lambda I)\pi_\lambda$, and remark that $N^J=0$ and both $D$ and $N$ commute with any projection $\pi_{\lambda}$ and therefore between themselves. With the convention $N^0\defeq I$ (even for $N=0$), for $k\ge J$ we have
\begin{align*}
\pi^{(2)}A^k=(D+N)^k=\sum_{l=0}^{J-1}{k\choose l}D^{k-l}N^l.
\end{align*} 
For $\mathrm{x}\in\C^{1\times J}$, $\mathrm{x}\pi^{(2)} Z_n$ can be written as a branching process $(Z_n^{\Theta})_n$ with  characteristic $\Theta$
\begin{align*}
\mathrm{x}\pi^{(2)} Z_n=\sum_{u\in\T}\ind{n-|u|=0}\mathrm{x}\pi^{(2)}e_{\type(u)}=Z^{\Theta}_n,
\end{align*}
where $\Theta:\Z\to\C^{1\times J}$ is defined as
$$\Theta(k)\defeq\ind{k=0}\mathrm{x}\pi^{(2)}.$$ 
Since $\Theta$ is just a deterministic function  and satisfies the assumptions of Lemma~\ref{lem: reduction to centred}, we  may apply the star transformation in order to get the corresponding centered random characteristic $\Theta^{\star}:\Z\to\C^{1\times J}$:
\begin{align}
	\label{eq:Z^2 as CMJ process}
\mathrm{x}\pi^{(2)} Z_n=Z^{\Theta}_n=Z^{\Theta^\star}_n+\E Z_n^{\Theta}=Z^{\Theta^\star}_n+\mathrm{x}\pi^{(2)}A^nZ_0\ind{n\ge 0},
\end{align}
that is,  the process $\mathrm{x}\pi^{(2)}Z_n-\mathrm{x}A^n_2Z_0$ is actually a branching process counted with a centered characteristic $\Theta^{\star}$:
$$\mathrm{x}\pi^{(2)}Z_n-\mathrm{x}A^n_2Z_0=Z^{\Theta^\star}_n.$$
On the other hand, by the definition  of $\Theta^\star$, we can rewrite
\begin{equation}\label{eq:theta-star}
\Theta^\star(k)=\sum_{l=0}^{\infty}\Theta(k-1-l)A^l(L-A)=\mathrm{x}\pi^{(2)}A^{k-1}(L-A)\ind{k>0},
\end{equation}
and 
$$\Var [\Theta^\star(k)]=\big(\Var \big[\mathrm{x}\pi^{(2)}A^{k-1}L\mathrm{e}_j\ind{k>0}\big]\big)_{1\le j\le J}.$$
If for all $0\le l \le J-1$, it holds
\begin{align*}
\sum_{j=1}^J\sum_{\lambda\in\sigma^2_A}\Var\big[\mathrm{x}  \pi_\lambda N^{l}L\mathrm{e}_j\big]= 0,
\end{align*}
then
$\mathrm{x}  \pi_\lambda N^{l}L=\mathrm{x}  \pi_\lambda N^{l}A$ which in turn gives $\Theta^\star(k)=0$ and this implies $$\mathrm{x}\pi^{(2)} Z_n=\mathrm{x}\pi^{(2)}A^nZ_0\ind{n\ge 0}.$$
Therefore, we may assume that there exists at least one $l$,  $0\le l\le J-1$ such that\\ $\sum_{j=1}^J\sum_{\lambda\in\sigma^2_A}\Var\left[\mathrm{x}  \pi_\lambda N^{l}L\mathrm{e}_j\right]$ does not vanish, and we let $l^*$ to be the maximal one among those $l$'s. So we have
\begin{align*}
\sum_{j=1}^J\sum_{\lambda\in\sigma^2_A}\Var\big[\mathrm{x}  \pi_\lambda N^{l^*}L\mathrm{e}_j\big]>0
\end{align*}
and by maximality of $l^*$, for $l>l^*$, any $j\in[J]$, and any $k\geq 0$,
$$\mathrm{x}\pi_\lambda  D^{k-l}N^{l}L\mathrm{e}_j=\mathrm{x}\pi_\lambda  D^{k-l}N^{l}A\mathrm{e}_j.$$
Note that $l^*$ is at most the size of the largest Jordan block corresponding to eigenvalues $\lambda\in \sigma^2_A$. 
In order to check that the first condition in Theorem~\ref{thm: centred characteristic}~{\it ii)} is satisfied for $\Theta^{\star}$, notice that for $k\ge 0$, we have
\begin{align*}
\Var &\big[\mathrm{x}\pi^{(2)}A^{k}L\mathrm{e}_j\big]=\Var \big[\mathrm{x}\pi^{(2)}  (D+N)^{k}L\mathrm{e}_j\big]
=\Var\Big[\mathrm{x}\pi^{(2)}  \sum_{l=0}^{J-1}{k \choose l}D^{k-l}N^{l}L\mathrm{e}_j\Big]\\
&=\Var\Big[\mathrm{x} \sum_{l=0}^{J-1}{k \choose l}\sum_{\lambda\in\sigma^2_A}\rho^{(k-l)/2} e^{\imag \arg(\lambda)(k-l)}\pi_{\lambda}N^lL\mathrm{e}_j\Big]\\
&=\Var\Big[\mathrm{x} \sum_{l=0}^{l^*}{k \choose l}\sum_{\lambda\in\sigma^2_A}\rho^{(k-l)/2} e^{\imag \arg(\lambda)(k-l)}\pi_{\lambda}N^lL\mathrm{e}_j\Big]\\
%& =\frac{k^{2l^*}}{(l^*!)^{2}}\Var \left[\mathrm{x}\pi^{(2)}  D^{k-l^*}N^{l^*}L\mathrm{e}_j\right]+O\left(k^{2l^*-1}\rho^k\right)\\
& =\frac{k^{2l^*}\rho^{k-l^*}}{(l^*!)^2}\Var\Big[\mathrm{x} \sum_{\lambda\in\sigma^2_A} e^{\imag \arg(\lambda)(k-l^*)}\pi_{\lambda}N^{l^*}L\mathrm{e}_j\Big]+O\big(k^{2l^*-1}\rho^k\Big).\\
%&=\sum_{l=0}^{J-1}\frac{k^{2l}\rho^{k-l}}{(l!)^2}\Var \Big(\sum_{\xi}\big<\pi^{(2)} x,  e^{\imag\xi(k-l)}\pi_\xi N^{l}He_j\big>\Big).
\end{align*}
Thus, for every $j\in[J]$ we have $\sup_nn^{-(2l^{\star}+1)}\sum_{k\leq n}\Var \big[\mathrm{x}\pi^{(2)}A^{k}L\mathrm{e}_j\big]\rho^{-j}<\infty$, and summing up over all $j\in[J]$ shows that the first condition  in Theorem~\ref{thm: centred characteristic}~{\it ii)} holds  for $\gamma=2l^*+1$ and the centered characteristic $\Theta^{\star}$.
We check now the second condition:
\begin{align*}
n^{-(2l^*+1)}&\sum_{k=0}^n\rho^{-k}\Var \big[\Theta^\star(k)\big]\mathrm{u}=n^{-(2l^*+1)}\sum_{j\in[J]}\sum_{ k=0}^{ n}\rho^{-k}\Var \big[\mathrm{x}\pi^{(2)}A^{k}L\mathrm{e}_j\big]\mathrm{u}_j\\
&=\sum_{j\in[J]}n^{-(2l^*+1)}\sum_{k=0}^{n}\frac{k^{2l^*}\rho^{-l^*}}{(l^*!)^2}\Var\Big[\mathrm{x} \sum_{\lambda\in\sigma^2_A} e^{\imag \arg(\lambda)(k-l^*)}\pi_{\lambda}N^{l^*}L\mathrm{e}_j\Big]\mathrm{u}_j+o(1)\\
&=\sum_{j\in[J]}\sum_{\lambda\in\sigma^2_A} \frac{\rho^{-l^*}}{(2l^*+1)(l^*!)^2}\Var \Big[\mathrm{x}  \pi_\lambda N^{l^*}L\mathrm{e}_j\Big]\mathrm{u}_j+o(1)\\
&=\frac{\rho^{-l^*}}{(2l^*+1)(l^*!)^2}\sum_{\lambda\in\sigma^2_A} \Var \left[\mathrm{x}  \pi_\lambda (A-\lambda I)^{l^*}L\right]\mathrm{u}+o(1)>0,
\end{align*}
where in the equation before the last one we have used that for $\lambda_1\neq\lambda_2$ and $j\in [J]$
\begin{align*}
\sum_{k=0}^{n}\frac{k^{2l^*}}{n^{2l^*+1}}&\Cov \Big[\mathrm{x}  e^{\imag\xi_1(k-l^*)}\pi_{\lambda_1} N^{l^*}L\mathrm{e}_j,\mathrm{x}  e^{\imag\xi_2(k-l^*)}\pi_{\lambda_2} N^{l^*}L\mathrm{e}_j\Big]\\
&=\Cov \Big[\mathrm{x} e^{-\imag\xi_1l^*}\pi_{\xi_1} N^{l^*}L\mathrm{e}_j,\mathrm{x}  e^{-\imag\xi_2l^*}\pi_{\xi_2} N^{l^*}L\mathrm{e}_j\Big]\sum_{k=0}^{ n-1}\frac{k^{2l^*}}{n^{2l^*+1}}e^{\imag k(\xi_1-\xi_2)}=o(1),
\end{align*}
with $\xi\defeq\arg(\lambda_i)$. So $\sigma_{l^{\star}}$ is given by
$$\sigma_{l^{\star}}=\frac{\rho^{-l^*}}{(2l^*+1)(l^*!)^2}\sum_{\lambda\in\sigma^2_A} \Var \Big[\mathrm{x}  \pi_\lambda (A-\lambda I)^{l^*}L\Big]\mathrm{u},$$
and with this in hands we are in the position to 
apply Theorem~\ref{thm: centred characteristic}~{\it ii)} to $Z^{\Theta^\star}_n$ and $\gamma=2l^*+1$, which in turn yields the claim
\begin{align*}
\frac{\mathrm{x}_2\pi^{(2)} (Z_n-A^nZ_0)}{n^{l^*+\frac12}\rho^{n/2}}=\frac{Z^{\Theta^\star}_n}{n^{(2l^*+1)/2}\rho^{n/2}}\stabto \sigma_{l^*}\sqrt W\cdot\mathcal{N}(0,1),\quad \text{as } n\to\infty,
\end{align*}
where by abuse of notation, in the claim of the result we still write $l$ instead of $l^{*}$, to denote the maximal $0\leq l\leq J-1$ for which $\sigma_l \neq 0$.
%If $\sigma^2=0$, than all the variances $\Var \left[\mathrm{x}  \pi_\xi N^{l^*}L\right]\mathrm{u}$ have to vanish, but we assume that $\sum_{j=1}^J \mathrm{u}_j\Cov\left[L^{(j)}\right]$ is not identically $0$, so $\sigma^2$ cannot be $0$.
\end{proof}

\section{Proof of Theorem \ref{thm:main}}\label{sec:proof-main-result}
In this section we finally provide the proof of the main result. Before doing so, we sketch the main idea behind it. For a random characteristic $\Phi$, in view of the linearity of $Z_n^{\Phi}$ in   $\Phi$, we can write 
$$ Z_n^\Phi=Z_n^{\Phi-\E[\Phi]}+Z_n^{\E[\Phi]},$$
where the first term is  subject of Theorem~\ref{thm: centred characteristic} since $\Phi-\E[\Phi]$ is a centered  characteristic.  We may thus assume for the moment, without loss of generality,  that the characteristic $\Phi$ of dimension one  is deterministic.
We  decompose the branching process $Z_n^{\Phi}$ counted with deterministic characteristic $\Phi$, according to the projections $\pi^{(i)}$ for $i=1,2,3$. Since for any $k\in \Z$, $\Phi(k)=\Phi(k)(\pi^{(1)}+\pi^{(2)}+\pi^{(3)})$ we can write
\begin{align*}
Z^\Phi_n=Z^{\Phi\pi^{(1)}}_n+Z^{\Phi\pi^{(2)}}_n+Z^{\Phi\pi^{(3)}}_n.
\end{align*}
The fluctuations of $Z_n^{\Phi}-\mathrm{x_1}A^n_1 W^{(1)}-\mathrm{x_2}A^n_2Z_0$ will be then a composition of the fluctuations
\begin{enumerate}[1.]
\setlength\itemsep{0em}
	\item  between the process  $Z_n^{\Phi\pi^{(1)}}$ and the rescaled martingale $\mathrm{x_1}A^n_1 W_n^{(1)}=\mathrm{x_1}\pi^{(1)}Z_n$,
	\item between the process  $Z_n^{\Phi\pi^{(2)}}$ and the rescaled martingale $\mathrm{x_2}A^n_2 W_n^{(2)}=\mathrm{x_2}\pi^{(2)}Z_n$,
	\item between the (rescaled) martingale $\mathrm{x_1}A^n_1 W_n^{(1)}$ and its limit $\mathrm{x_1}A^n_1 W^{(1)}$,
	\item between the (rescaled) martingale $\mathrm{x_2}A^n_2 W_n^{(2)}$ and its expectation $\mathrm{x_2}A^n_2Z_0$,
	\item fluctuations of the process $Z_n^{\Phi\pi^{(3)}}$.
\end{enumerate}  
Each of these fluctuations, except the one from 4., is of magnitude $\rho^{n/2}$. The later (if nontrivial), is in view of Theorem~\ref{lem:critical martingale} of magnitude $n^{l+\frac12}\rho^{n/2}$. This is the dichotomy in Theorem~\ref{thm:main}. Now we show how each of the above processes can be expressed as branching processes counted with a centered characteristic and a deterministic error of order $o(\rho^{n/2})$. Even though the calculations are technical, they are quite straightforward.
\subsection{The first two terms: the differences  $Z^{\Phi\pi^{(i)}}_n-\mathrm{x_i}\pi^{(i)}Z_n$ for $ i=1,2$.}

This two terms can be handled together, by estimating first the growth in mean of the differences.  
Recalling the definition of the vectors $\mathrm{x}_i$ from \eqref{eq:vect-x1-x2}, for $i=1,2$, we get
%	$$\mathrm{x}_i\pi^{(i)}Z_n	=Z_n^{\ind{0}\mathrm{x}_i\pi^{(i)}}$$
%	and therefore 
	$$\E\big[\mathrm{x}_i\pi^{(i)}Z_n\big]	=\mathrm{x}_i\pi^{(i)}A^nZ_0.$$
	Therefore, for $n\in\N$ large enough we have 
	\begin{align*}
\E\big[\mathrm{x}_i\pi^{(i)}Z_n\big]	-	\E\big[ Z^{\Phi\pi^{(i)}}_n\big]&=\sum_{l\in \Z}\Phi(l)\pi^{(i)}A^{n-l}Z_0- \sum_{l=-\infty}^n\Phi(l)\pi^{(i)}A^{n-l}Z_0 \\
	&=\sum_{l=n+1}^{\infty}\Phi(l)\pi^{(i)}A^{n-l}Z_0=O\Big(\sum_{l=n+1}^{\infty}\|\Phi(l)\|\vartheta^{{n-l}}\Big)=O(\vartheta^n),
	\end{align*}
	and similarly, for negative integers $n\leq 0$
	\begin{align*}
	\E \big[Z^{\Phi\pi^{(i)}}_{n}\big]=\sum_{l=-\infty}^{n}\Phi(l)\pi^{(i)}A^{n-l}Z_0=O\Big(\sum_{l=-\infty}^{n}\|\Phi(l)\|\rho^{n-l}\Big)=O(\rho^{n}).
	\end{align*}
	Thus for $i=1,2$ it holds
	\begin{equation}\label{eq:expec-id}
	\big|\E\big[\mathrm{x}_i\pi^{(i)}Z_n\big]	-	\E\big[ Z^{\Phi\pi^{(i)}}_n\big]\big|\leq C(\rho^{-n}\wedge\vartheta^n).
	\end{equation}	
	In other words, for the characteristic $\Psi_i:\Z\to \C^{1\times J}$, $i=1,2$ defined by
	$$\Psi_i(k)\defeq\big(\Phi(k)-\ind{k=0}\mathrm{x}_i\big)\pi^{(i)},$$
	we have 
	$$Z^{\Psi_i}_n=Z^{\Phi\pi^{(i)}}_n-\mathrm{x_i}\pi^{(i)}Z_n \quad\text{and}\quad \big|\E Z^{\Psi_i}_n\big|\leq C(\rho^{n}\wedge\vartheta^n).$$
	Since
	\begin{align*}
	\sum_{k=0}^{\infty}||\Psi_i(-k)||\rho^k\leq \sum_{k=0}^{\infty}||\Phi(-k)||\rho^k+||\mathrm{x}_i\pi^{(i)}||\leq \sum_{k=0}^{\infty}||\Phi(-k)||\rho^k+\sum_{k\in \Z}||\Phi(k)||\rho^{-k}<\infty,
	\end{align*}
	the centered random characteristic $\Psi^{\star}_i:\Z\to\C^{1\times J}$ is well defined and given by
	\begin{align*}
	\Psi_i^{\star}(k)&=\sum_{l=0}^{\infty}\Psi_i(k-l-1)A^l(L-A)=\sum_{l=0}^{\infty}\left(\Phi(k-l-1)-\ind{k-1=l}\mathrm{x}_i\right)\pi^{(i)}A^l(L-A)\\
	&=\sum_{l=0}^{\infty}\Phi(k-l-1)\pi^{(i)}A^l(L-A)-\mathrm{x}_i\pi^{(i)}A^{k-1}(L-A)\ind{k>0}\\
	&=\sum_{l=0}^{\infty}\Phi(k-l-1)\pi^{(i)}A^l(L-A)-\sum_{l\in \Z}\Phi(l)\pi^{(i)}A^{k-l-1}(L-A)\ind{k>0}\\
	&=\sum_{l=0}^{\infty}\Phi(k-l-1)\pi^{(i)}A^l(L-A)-\sum_{l\in \Z}\Phi(k-l-1)\pi^{(i)}A^{l}(L-A)\ind{k>0}
	\end{align*}
	which  equals 
	\begin{equation}\label{eq:cent2}
	\Psi_i^{\star}(k)=
	\begin{cases}
	\sum_{l=0}^{\infty}\Phi(k-l-1)\pi^{(i)}A^{l}(L-A)&\quad \text{if } k\leq 0,\\
	-\sum_{l=-\infty}^{-1}\Phi(k-l-1)\pi^{(i)}A^{l}(L-A)&\quad \text{if } k> 0.
	\end{cases}
	\end{equation}
	Moreover,  by Lemma~\ref{lem: reduction to centred} 
	$$\sum_{k\in \Z}\E\left[\|\Psi_i^{\star}(k)\|^2\right]\rho^{-k}<\infty,$$
	and 
	$$Z^{\Phi\pi^{(i)}}_n-\mathrm{x_i}\pi^{(i)}Z_n=Z^{\Psi_i^\star}_n+\E Z^{\Psi_i}_n=Z^{\Psi_i^\star}_n+O(\vartheta^n),\quad \text{for }i=1,2.$$

\subsection{Martingale fluctuations $\mathrm{x}_1 A_1^{n}\big(W^{(1)}_{n}-W^{(1)}\big)$}

We consider now the fluctuations in point 3.~of the main proof sketch. For the characteristic $\Psi_3:\Z\to\C^{1\times J}$ defined as
\begin{equation}\label{eqref:psi3}
\Psi_3(k)\defeq-\mathrm{x}_1\Phi^{1}(k)= -\mathrm{x}_1A_1^{k-1}\pi^{(1)} (L-A)\ind{k\le 0},
\end{equation}
Lemma \ref{lem: difference of martingale and its limit} together with the definition of a branching process with a characteristic implies
	$$\mathrm{x}_1 A_1^{n}\left(W^{(1)}_{n}-W^{(1)}\right)=Z^{\Psi_3}_n.$$
Clearly, $\Psi_3$ is centered and
	\begin{align*}
	\sum_{k\in \Z}\E\big[\|\Psi_3(k)\|^2\big]\rho^{-k}=
	\|\mathrm{x}_1\|^2\E\left[\|L-A\|^2\right]\sum_{k=-\infty}^{0}\big\|A_1^{k-1}\pi^{(1)}\big\|^2\rho^{-k}<\infty,
	\end{align*}
	as $\|A_1^{-1}\pi^{(1)}\big\|\le\rho^{\frac12+\delta}$ for some $\delta>0$.

\subsection{The third  projection $Z^{\Phi\pi^{(3)}}_n$}

Here we investigate the fluctuations of $Z^{\Phi\pi^{(3)}}_n$, hence point 5.~in the sketch of the main proof.
Since for every $n\in\N$
$$Z^{\Phi\pi^{(3)}}_n=\sum_{k=0}^{\infty}\Phi(n-k)\pi^{(3)}Z_k=\sum_{k=-\infty}^{n}\Phi(k)\pi^{(3)}Z_{n-k},$$
 we have by the same argument that led to \eqref{eq:expec-id} together with $\|\pi^{(3)}A^{i}\|\le C\vartheta^i$, that
	\begin{align}\nonumber
	|\E Z^{\Phi\pi^{(3)}}_{n}|&=\Big|\sum_{k=-\infty}^n\Phi(k)\pi^{(3)}A^{n-k}Z_0\Big|\\
	\label{eq:aproximation of 3rd term}
	&\leq C\min\Big(\sum_{k=-\infty}^ n\|\Phi(k)\|\vartheta^{{n-k}},\sum_{k=-\infty}^ {n}\|\Phi(k)\|\rho^{{n-k}}\Big)
	\le C'( \rho^{n}\wedge\vartheta^n ).
	\end{align}	
By defining $\Psi_4:\Z\to\C^{1\times J}$ as $\Psi_4(k)\defeq\Phi(k)\pi^{(3)}$, we can write
$$Z_n^{\Psi_4}=Z_n^{\Psi_4^\star}+\E\left[Z_n^{\Psi_4}\right]=Z_n^{\Psi_4^\star}+O(\vartheta^n),$$
	where $\Psi_4^\star$ is the centered characteristic given by
\begin{equation}\label{eq:psi4}	
\Psi_4^\star(k)=\sum_{l=0}^{\infty}\Phi(k-1-l)\pi^{(3)} A^l(L-A).\end{equation}
Finally, in view of Lemma \ref{lem: reduction to centred} we have
	$$\sum_{k\in \Z}\E\left[\|\Psi_4^{\star}(k)\|^2\right]\rho^{-k}<\infty.$$

\subsubsection*{Putting all together and completing the main proof for random $\Phi$}

For the rest of this section we assume that $\Phi$ is as in Theorem~\ref{thm:main}, that is, it may be random.
Let $\Psi^\star_1,\Psi^\star_2,\Psi_3,\Psi^\star_4 $ and $\Theta^\star$ be the centered characteristics  as defined previously for the deterministic characteristic $\E\Phi$. More precisely, for $i=1,2$, $\Psi_i^{\star}$ is defined as in \eqref{eq:cent2}
with $\Phi$ being replaced by $\E\Phi$; $\Psi_3$ is already centered and defined by \eqref{eqref:psi3}; $\Psi_4^{\star}$ is defined by \eqref{eq:psi4} with $\Phi$ being replaced by $\E\Phi$; finally $\Theta^{\star}$ is defined as in \eqref{eq:theta-star}.
Then we can then decompose $Z_n^{\Phi}-\mathrm{x_1}A^n_1 W^{(1)}-\mathrm{x_2}A^n_2Z_0$, for $n\in\N$ as
\begin{align}
\nonumber
Z_n^{\Phi}-\mathrm{x_1}A^n_1 W^{(1)}-&\mathrm{x_2}A^n_2Z_0=Z_n^{\Phi}-Z_n^{\E\Phi}+Z_n^{\E\Phi}-\mathrm{x_1}A^n_1 W^{(1)}-\mathrm{x_2}A^n_2Z_0\\
\nonumber
&=Z_n^{\Phi-\E\Phi}+Z_n^{\Psi^\star_1}+Z_n^{\Psi^\star_2}+Z_n^{\Psi_3}+Z_n^{\Psi^\star_4}+
\mathrm{x}_2\pi^{(2)} (Z_n-A_2^nZ_0)
+O(\vartheta^n)\\
\label{eq:decomposition of the process}
&=Z_n^{\Phi-\E\Phi+\Psi}+\mathrm{x}_2\pi^{(2)} (Z_n-A_2^nZ_0)+O(\vartheta^n),
\end{align}
where $\Psi$ is the centered characteristic given by
$$\Psi\defeq \Psi^\star_1+\Psi^\star_2+\Psi_3+\Psi^\star_4, $$
that is,
\begin{align*}
\Psi(k)=
\sum_{l\in \Z}\E\Phi(k-l-1)A^l\mathsf{P}(k,l)(L-A),
\end{align*}where
\begin{align*}
\mathsf{P}(k,l)\defeq\begin{cases}
-\pi^{(1)}\ind{l<0}+\pi^{(2)}\ind{l\geq 0}+\pi^{(3)}\ind{l\geq 0},\quad \text{if }k\leq 0\\
-\pi^{(1)}\ind{l<0}-\pi^{(2)}\ind{l<0}+\pi^{(3)}\ind{l\geq 0},\quad \text{if }k>0
\end{cases}
\end{align*}
Note that $$\sum_{k\in \Z}\E\left[\|\Psi(k)\|^2\right]\rho^{-k}<\infty,$$
as it is true for each of the four terms $\Psi^\star_1$, $\Psi^\star_2$, $\Psi_3$ and $\Psi^\star_4$ appearing in  $\Psi$,
and $\Psi$ satisfies the assumptions of Theorem \ref{thm: centred characteristic}. Therefore  
\begin{align}
\label{eq:definition of sigma}
\sigma^2\defeq\sum_{k\in \Z}\Var\left[\Phi(k)+\Psi(k)\right]\rho^{-k}\mathrm{u},
\end{align} 
is well defined. In particular, when $\Phi$ is independent of the offspring distribution matrix $L$ we have
\begin{align*}
\sigma^2=\sum_{k\in \Z}\Var\left[\Phi(k)\right]\rho^{-k}\mathrm{u}+\sum_{k\in \Z}\Var\left[\Psi(k)\right]\rho^{-k}\mathrm{u}.
\end{align*} 
For $k\in \Z$, if we define matrices $\mathsf{B}(k)$ by: 
\begin{align*}
\mathsf{B}(k)\defeq\sum_{l\in \Z}\E\left[\Phi(k-l-1)\right]A^l\mathsf{P}(k,l),
\end{align*}
then 
$\Var\left[\Psi(k)\right]
=\Big(\mathsf{B}(k)\Cov\big[L^{(1)}\big]\mathsf{B}(k)^\top,\dots,\mathsf{B}(k)\Cov\big[L^{(J)}\big]\mathsf{B}(k)^\top\Big).
$

Now  we can finally complete the proof of our main result.
\begin{proof}[Proof of Theorem~\ref{thm:main}]
	We use the decomposition \eqref{eq:decomposition of the process} of $Z_n^{\Phi}$. For the process $Z_n^{\Phi-\E\Phi+\Psi}$ we apply Theorem~\ref{thm: centred characteristic} \textit{i)} and  conclude that 
	\begin{align*}
	\frac{Z_n^{\Phi-\E\Phi+\Psi}}{\rho^{n/2}}\stabto \sigma \sqrt W\cdot\mathcal{N}(0,1), \quad \text{as }n\to\infty,
	\end{align*}
	with $\sigma$ defined by \eqref{eq:definition of sigma}.
	If  $\sigma_l=0$ for all $0\leq l\leq J-1$, then by Theorem~\ref{lem:critical martingale} we obtain that
	\begin{align*}
	\frac{Z_n^{\Phi}-\mathrm{x_1}A^n_1 W^{(1)}-\mathrm{x_2}A^n_2Z_0}{\rho^{n/2}}=\frac{Z_n^{\Phi-\E\Phi+\Psi}+O(\vartheta^n)}{\rho^{n/2}}\stabto  \sigma \sqrt W\cdot\mathcal{N}(0,1),\quad \text{as } n\to\infty
	\end{align*}
	by using Slutsky's theorem. Moreover, if also $\sigma=0$ then  $Z_n^{\Phi}-\mathrm{x_1}A^n_1 W^{(1)}-\mathrm{x_2}A^n_2Z_0$ is deterministic of growth $O(\vartheta^n)$.
	 
	 Finally, let $l=\max\{1\leq j\leq J-1:\  \sigma_j>0\}$. Then again by Slutsky's theorem together with Theorem~\ref{lem:critical martingale}, we obtain
	 \begin{align*}
	 \frac{Z_n^{\Phi}-\mathrm{x_1}A^n_1 W^{(1)}-\mathrm{x_2}A^n_2Z_0}{n^{l+\frac 12}\rho^{n/2}}=\frac{Z_n^{\Phi-\E\Phi+\Psi}+\mathrm{x}_2\pi^{(2)} (Z_n-A_2^nZ_0)+O(\vartheta^n)}{n^{l+\frac 12}\rho^{n/2}}\stabto\sigma_l  \sqrt W\cdot\mathcal{N}(0,1),
	 \end{align*}
where in both cases $\mathcal{N}(0,1)$ is a standard normal random variable independent of $W$, and this completes  the proof.
\end{proof}
\begin{remark}
	\label{rem:almost sure behaviour}
Let us emphasize that the decomposition in \eqref{eq:decomposition of the process} together with \eqref{eq:Z^2 as CMJ process} allows us to write $Z_n^{\Phi}-\mathrm{x_1}A^n_1 W^{(1)}-\mathrm{x_2}A^n_2Z_0$, up to an error or order $O(\vartheta^n)$,  as a  CMJ process $Z^{\Phi'}_n$ for some centered characteristic $\Phi'$ such that $k\mapsto \|\Var[\Phi'(k)]\|\rho^{-k}$ is summable or $\sum^N_{k=0}\|\Var[\Phi'(k)]\|\rho^{-k}=O( N^{2l+1})$. This, in turn, gives that $\E\big[(Z^{\Phi'}_n)^2\big]=\E\big[Z^{\Var\Phi'}_n\big]$ is either of order $O(\rho^n)$ or $O(n^{2l+1}\rho^n)$. In both cases, for any $\theta>\sqrt\rho$  Chebyshev's inequality together with Borel–Cantelli lemma yields $Z^{\Phi'}_n=o(\theta^n)$ almost surely, that is
	$$Z_n^{\Phi}-\mathrm{x_1}A^n_1 W^{(1)}-\mathrm{x_2}A^n_2Z_0=o(\theta^n)\quad\text{almost surely.}$$
We leave the details of this calculations to the interested reader.	
\end{remark}

\subsection{Application}

In \cite{kesten-stigum-add}, the authors investigated additional limit theorems for $\langle Z_n,\mathrm{a}\rangle$, where $(Z_n)_{n\in \N}$ is a supercritical and positively regular multitype Galton-Watson process with $J$ number of types, such that all $Z_n^i$, $1\leq i\leq J$ have finite second moments, and $\mathrm{a}\neq 0$ is a vector in $\R^J$ such that $\langle\mathrm{a},\mathrm{u}\rangle =0$. 
In this case, Kesten-Stigum theorem \cite{kesten1966} ensures that $\rho^{-n}\langle Z_n,\mathrm{a}\rangle$ converges to 0 almost surely. The authors have then provided in \cite{kesten-stigum-add} a correct normalization and studied the limit.
%They have shown the existence of an exponent $\theta\geq 0$ such that $\frac{\langle Z_n,\mathrm{a}\rangle}{n^{\theta}\rho^{n/2}}$ converges in distribution to a mixture of normal distributions. Note that if $\langle\mathrm{a},\mathrm{u}\rangle \neq 0$, then the limit distribution of $\frac{1}{\rho^{n}}\langle Z_n,\mathrm{a}\rangle$ is known.
More precisely, they have identified all the deterministic leading terms in the expansion of  $\langle Z_n,\mathrm{a}\rangle$.
We show below, by applying our main Theorem \ref{thm:main}, how to find all the terms (not just the  leading ones) in the asymptotic expansion of  $\langle Z_n,\mathrm{a}\rangle$ up to Gaussian fluctuations, and this, of course, extends \cite{kesten-stigum-add}.

We consider  the deterministic characteristic $\Phi:\Z\to\C^{1\times J}$ defined as:
\begin{equation}\label{eq:char-kesten}
\Phi(k)=\mathrm{a}\ind{k=0},
\end{equation}
for row vector $\mathrm{a}\in \R^{1\times J}$. For such a characteristic both conditions \eqref{eq:main-assum1} and \eqref{eq:main-assum2} are trivially satisfied and the vectors $\mathrm{x}_1$ and $\mathrm{x}_2$ from \eqref{eq:vect-x1-x2} become then
$$\mathrm{x}_1=\mathrm{a}\pi^{(1)}\quad \text{and} \quad \mathrm{x}_2=\mathrm{a}\pi^{(2)}.$$
By applying Theorem \ref{thm:main} we obtain the following.
\begin{corollary}\label{cor:kest-stig}
Let $\mathrm{a}\in \R^{1\times J}$ and suppose  $(GW1)-(GW3)$ hold. Consider the characteristic $\Phi$ from \eqref{eq:char-kesten} and its associated branching process $(Z_n^{\Phi})$ counted with this characteristic. Then the following stable convergence holds.
\vspace{-0.25cm}
	\begin{enumerate}[i)]
	\setlength\itemsep{0em}
		\item If $\sigma_l=0$ for all $0\le l\leq J-1$,  with $\sigma_l$ given as in \eqref{eq:definition of sigma_l}, then there exists a constant $\sigma \ge0$ such that either $\sigma >0$ and
		\begin{align*}
		\frac{\mathrm{a}\left(Z_n-\pi^{(1)}A_1^n W^{(1)}-\pi^{(2)}A_2^nZ_0 \right)}{\rho^{n/2}}\stabto \sigma  \sqrt W\cdot\mathcal{N}(0,1),\quad \text{as } n\to\infty
		\end{align*} 
		or $\sigma =0$ and the left hand side above is a deterministic sequence converging to 0.
		\item Otherwise, let $0\le l\le J-1$ be maximal such that $\sigma_l\neq0$. Then
		\begin{align*}
		\frac{\mathrm{a}\left(Z_n-\pi^{(1)}A_1^n W^{(1)}-\pi^{(2)}A_2^nZ_0 \right)} {n^{l+\frac 12}\rho^{n/2}}\stabto \sigma_l \sqrt W\cdot\mathcal{N}(0,1), \quad \text{as } n\to\infty.
		\end{align*}
		In both cases $\mathcal{N}(0,1)$ is a standard normal random variable independent of $W$.
	\end{enumerate}
\end{corollary}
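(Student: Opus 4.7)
The plan is to apply Theorem~\ref{thm:main} directly to the deterministic, scalar-valued characteristic $\Phi(k) = \mathrm{a}\ind{k=0}$ of dimension $d=1$ and then translate the conclusion into the form stated in the corollary. All the hard work is already encapsulated in Theorem~\ref{thm:main}; what remains is to verify that this $\Phi$ satisfies the standing hypotheses (CH1)--(CH3) and to identify the various ingredients ($Z_n^{\Phi}$, $\mathrm{x}_1$, $\mathrm{x}_2$) explicitly.

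First, I check the hypotheses. Since $\Phi$ is deterministic and nonzero only at $k=0$ with $\|\Phi(0)\| = \|\mathrm{a}\|<\infty$, the condition \eqref{eq:expec-char} is immediate, \eqref{eq:main-assum1} reduces to $\|\mathrm{a}\|(1+1)<\infty$ for any $\vartheta<\sqrt\rho$, and \eqref{eq:main-assum2} holds trivially as $\Var[\Phi(k)]=0$ for every $k$. Next, $Z_n^{\Phi}$ collapses to a scalar product with $Z_n$: for every $n\geq 0$,
\begin{align*}
Z_n^{\Phi} \;=\; \sum_{u\in\T}\Phi_u(n-|u|)\mathrm{e}_{\type(u)}\;=\;\sum_{\substack{u\in\T\\|u|=n}}\mathrm{a}\,\mathrm{e}_{\type(u)}\;=\;\mathrm{a}\,Z_n.
\end{align*}
From the definition \eqref{eq:vect-x1-x2}, only the $k=0$ term contributes in the defining sums of $\mathrm{x}_1$ and $\mathrm{x}_2$, so $\mathrm{x}_1=\mathrm{a}\pi^{(1)}$ and $\mathrm{x}_2=\mathrm{a}\pi^{(2)}$.

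The last computational step is to simplify the centering terms. Because $V^{(i)}$ is $A$-invariant, $\pi^{(i)}A = A\pi^{(i)}$, and $A_i = A\pi^{(i)} + (I-\pi^{(i)})$ gives $\pi^{(i)} A_i = A\pi^{(i)}$; iterating yields $\pi^{(i)}A_i^n = A^n\pi^{(i)}$ for every $n\in\N$. Consequently, using that $W^{(1)}\in V^{(1)}$ (so $\pi^{(1)}W^{(1)}=W^{(1)}$),
\begin{align*}
\mathrm{x}_1 A_1^n W^{(1)} \;=\; \mathrm{a}\pi^{(1)}A_1^n W^{(1)} \;=\; \mathrm{a}\,A^n\pi^{(1)} W^{(1)} \;=\; \mathrm{a}\pi^{(1)} A^n W^{(1)},
\end{align*}
and analogously $\mathrm{x}_2 A_2^n Z_0 = \mathrm{a}\pi^{(2)} A^n Z_0$. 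Plugging these into Theorem~\ref{thm:main} yields exactly the two cases asserted in Corollary~\ref{cor:kest-stig}.

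There is essentially no conceptual obstacle here; the only subtlety worth flagging is the role played by $\mathrm{a}\perp\mathrm{u}$. Since $\pi_\rho$ is the rank-one projection onto $\mathrm{span}(\mathrm{u})$ (concretely, $\pi_\rho=\mathrm{u}\mathrm{v}^\top$ as $\mathrm{v}^\top\mathrm{u}=1$), the hypothesis $\mathrm{a}\mathrm{u}=0$ forces $\mathrm{a}\pi_\rho=0$. Thus the Perron direction, which would contribute the deterministic growth $\rho^n W\,\mathrm{a}\mathrm{u}$ and dominate everything else, is annihilated by $\mathrm{a}$; this is precisely what allows the fluctuations around the remaining, strictly slower growing centering $\mathrm{a}(\pi^{(1)}A^n W^{(1)} + \pi^{(2)}A^n Z_0)$ to be of order $\rho^{n/2}$ (or $n^{l+1/2}\rho^{n/2}$), so that the central limit theorem in Theorem~\ref{thm:main} delivers a nondegenerate Gaussian limit mixed with $\sqrt W$ rather than a law of large numbers.
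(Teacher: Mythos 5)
Your proposal is correct and follows essentially the same route as the paper: specializing Theorem~\ref{thm:main} to the deterministic characteristic $\Phi(k)=\mathrm{a}\ind{k=0}$, checking (CH1)--(CH3), identifying $Z_n^{\Phi}=\mathrm{a}Z_n$, $\mathrm{x}_1=\mathrm{a}\pi^{(1)}$, $\mathrm{x}_2=\mathrm{a}\pi^{(2)}$, and rewriting the centering via $\pi^{(i)}A_i^n=A^n\pi^{(i)}$. Your explicit verification of the commutation identities and the remark on the role of $\mathrm{a}\mathrm{u}=0$ (annihilating the Perron projection $\pi_\rho=\mathrm{u}\mathrm{v}^\top$) are welcome details that the paper leaves implicit.
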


\begin{remark}
	For any $\mathrm{a}\in \mathbb{R}^J$, the variances  $\sigma^2,\sigma^2_l$ are given by $\sigma^2=\mathrm{a}\Sigma\mathrm{a}^{\mathsf T}$ and $\sigma^2_l=\mathrm{a}\Sigma_l\mathrm{a}^{\mathsf T}$ for some covariance matrices $\Sigma$ and $\Sigma_l$ that depend only on the matrices $\Cov\big[L^{(j)}\big]$ for $j\in J$ and the matrix $A$. Indeed, we have
	\begin{align*}
		\sigma^2&=\sum_{k\in \Z}\Var\big[\mathrm{a}A^{k-1}\mathsf{P}(k,k-1)(L-A)\big]\rho^{-k}\mathrm{u}\\
		&=\mathrm{a}\sum_{k\in \Z}\mathsf{B}(k)\Big(\sum_{j=1}^J\mathrm{u}_j\Cov\big[L^{(j)}\big]\Big)\mathsf{B}(k)^{\mathsf{T}}\rho^{-k}\mathrm a^{\mathsf T},
	\end{align*} 
	where, for any $k\in \Z$, $\mathsf{B}(k)=\mathrm{a}A^{k-1}\mathsf{P}(k,k-1)$ and $\mathsf{P}(k,k-1)$ is given by
	\begin{align*}
		\mathsf{P}(k,k-1)=
		\begin{cases}
			-\pi^{(1)},&\quad \text{if }k\leq 0\\
			\pi^{(3)},&\quad \text{if }k>0.
		\end{cases}
	\end{align*}
	Therefore, in a simplified way, we can write
	$$\mathsf{B}(k)=A^{k-1}\big(-\pi^{(1)}\ind{k\leq 0}+\pi^{(3)}\ind{k>0}\big)
	$$
	and $\sigma^{2}$ reduces then to $\sigma^2=\mathrm a^{\mathsf T}\Sigma\mathrm a $ with
	\begin{align*}
		\Sigma&=\sum_{k=1}^{\infty}\rho^{-k}A^{k-1}\pi^{(3)}\Big(\sum_{j=1}^J\mathrm{u}_j\Cov\big[L^{(j)}\big]\Big)(A^{k-1}\pi^{(3)})^{\mathsf{T}}\\
		&\quad+\sum_{k\leq 0}\rho^{-k}A^{k-1}\pi^{(1)}\Big(\sum_{j=1}^J\mathrm{u}_j\Cov\big[L^{(j)}\big]\Big)(A^{k-1}\pi^{(1)})^{\mathsf{T}}.
	\end{align*}
	For $\sigma_l$ we have
	\begin{align*}
		\sigma_l^2\defeq \frac{\rho^{-l}}{(2l+1)(l!)^2}\sum_{\lambda\in\sigma^2_A}\Var \big[\mathrm{a} \pi_\lambda N_\lambda^{l}L\big]\mathrm{u}
	\end{align*}
that is $$\Sigma_l=\frac{\rho^{-l}}{(2l+1)(l!)^2}\sum_{\lambda\in\sigma^2_A}\sum_{i\in[J]}\pi_\lambda N_\lambda^{l}\Cov \big[L^i\big]\mathrm{u_i}\big(\pi_\lambda N_\lambda^{l}\big)^{\mathsf{T}}.$$
\end{remark}
\begin{remark}
We now explain how to use Corollary \ref{cor:kest-stig} in order to recover the results from \cite{kesten-stigum-add}. First observe that 
	\begin{align*}
		\mathrm{a}\pi^{(1)}A_1^n W^{(1)}=\sum_{\lambda\in\sigma_A^1}\mathrm{a}\pi_\lambda A^n W^{(1)}
		=\rho^n\mathrm{a}\mathrm{u}W+\sum_{\lambda\in\sigma_A^1\setminus\{\rho\}}\mathrm{a}\pi_\lambda A^n W^{(1)},
	\end{align*}
and each of the summation terms above, for $\lambda\in\sigma_A^1\setminus\{\rho\}$, can be further decomposed into
$$\mathrm{a}\pi_\lambda A^n W^{(1)}=\mathrm{a}(\lambda I+N_\lambda)^n \pi_\lambda W^{(1)}
=\sum_{k=0}^{J-1} {n\choose k}\lambda^{n-k}\mathrm{a}N_{\lambda}^k\pi_\lambda W^{(1)}.$$
This shows that $\mathrm{a}\pi^{(1)}A_1^n W^{(1)}$ can be written as linear combination of terms $n^k\lambda^n$ with possibly random coefficients. Moreover, if the coefficient of $n^k\lambda^n$ is not deterministic then $\mathrm{a}N_{\lambda}^\gamma\pi_\lambda L$ is not deterministic for some $\gamma\ge k$.
 
Let $(\theta,\gamma)\in[\sqrt\rho,\rho]\times \N_0$ be the largest pair (in lexicographical order) such that there is $\lambda\in\sigma_A$ with $|\lambda|=\rho$ and   
$$\mathrm{a}N_{\lambda}^\gamma\pi_\lambda A\neq\mathrm{a}N_{\lambda}^\gamma\pi_\lambda L$$ with positive probability, provided such a pair exists. If $\theta>\sqrt\rho$ then from Remark~\ref{rem:almost sure behaviour} we get 
$$\mathrm{a}Z_n=n^\gamma\theta^n\sum_{\lambda\in\sigma_A,|\lambda|=\theta}\Big(\frac{\lambda}{\theta}\Big)^n
\mathrm{a}N_{\lambda}^{\gamma}\pi_\lambda W^{(1)}+o_{a.s.}(n^\gamma\theta^n),$$
that is 
\begin{align}
	\frac{\mathrm{a}Z_n-\E[\mathrm{a}Z_n]}{n^\gamma\theta^n}-\sum_{\lambda\in\sigma_A,|\lambda|=\theta}\Big(\frac{\lambda}{\theta}\Big)^n
	\mathrm{a}N_{\lambda}^{\gamma}\pi_\lambda\big( W^{(1)}-Z_0\big)\AS 0.
\end{align}
%thus recovering \cite[Theorem 2.1]{kesten-stigum-add} when $\mathrm{a}\mathrm{u}=0$.

If $\theta=\sqrt\rho$, then $\gamma=l$ for $l$ as in Corollary \ref{cor:kest-stig} \textit{ii)} (with $\sigma_l$ as above). In this case 
$ \mathrm {a} A^k\pi^{(1)}(L-A)=0$ almost surely for any $k\in\Z$.
In particular,
\begin{align*}
	\mathrm{a}A^k\pi^{(1)}Z_{n+1}=\sum_{u\in \T; |u|=n}\mathrm{a}A^kL(u)\mathrm{e}_{\type(u)}=\sum_{u\in \T; |u|=n}\mathrm{a}A^kA\mathrm{e}_{\type(u)}=\mathrm{a}A^{k+1}\pi^{(1)}Z_{n}
\end{align*}
and so recursively
\begin{align*}
	\mathrm{a}\pi^{(1)}Z_{n}=\mathrm{a}A^{n}\pi^{(1)}Z_{0}.
\end{align*} 
On the other hand, according to Lemma 	\ref{lem: difference of martingale and its limit}
$$\mathrm{a} A_1^{n}\big(W^{(1)}-W^{(1)}_{n}\big)=Z^{\mathrm{a}\Phi^{1}}_n=0,$$
since $\mathrm{a}\Phi^{1}(k)=\mathrm{a}A_1^{k-1}\pi^{(1)} (L-A)\ind{k\le 0}=0$. Putting these two facts together we conclude
\begin{align*}
\mathrm{a}\pi^{(1)} A^{n}W^{(1)}=	\mathrm{a} \pi^{(1)}A^{n}W^{(1)}_n=\mathrm{a} \pi^{(1)}Z_n=\mathrm{a}A^{n}\pi^{(1)}Z_{0}.
\end{align*}
Hence
\begin{align*}
	\E [aZ_n]&=\mathrm{a}A^{n}\pi^{(1)}Z_{0}+\mathrm{a}A^{n}\pi^{(2)}Z_{0}+\mathrm{a}A^{n}\pi^{(3)}Z_{0}
	\\&=\mathrm{a}A^{n}\pi^{(1)}W^{(1)}+\mathrm{a}A^{n}\pi^{(2)}Z_{0}+o(\rho^{n/2})
\end{align*}
and from Corollary \ref{cor:kest-stig} {\it ii)} we infer
% and 
% $$\mathrm{a}\pi^{(2)}A_1^n Z_0=\mathrm{a}\pi^{(2)}A_1^n Z_0+\mathrm{a}\pi^{(2)}A_1^n Z_0+\mathrm{a}\pi^{(2)}A_1^n Z_0
%O(n^\gamma\rho^{n/2})=o(n^{\gamma+\frac12}\rho^{n/2}).$$
% Note also that in this case $\gamma\ge l$ for $l$ as in Corollary \ref{cor:kest-stig} \textit{ii)} and therefore
\begin{align*}
	\frac{\mathrm{a}Z_n-\E[\mathrm{a}Z_n]} {n^{\gamma+\frac 12}\rho^{n/2}}\distto \sigma_\gamma \sqrt W\cdot\mathcal{N}(0,1).
\end{align*}
%with  vanishing limit if $l<\gamma$. This recovers  \cite[Theorem 2.2]{kesten-stigum-add} when $\mathrm{a}\mathrm{u}=0$.

If there is no such pair $(\theta,\gamma)$, then as before $\E [aZ_n]=\mathrm{a}A^{n}\pi^{(1)}Z_0+\mathrm{a}A^{n}\pi^{(2)}Z_{0}+o(\rho^{n/2})$, and  Corollary \ref{cor:kest-stig} {\it i)} implies
\begin{align*}
	\frac{\mathrm{a}Z_n-\E[\mathrm{a}Z_n]} {\rho^{n/2}}\distto \sigma\sqrt W\cdot\mathcal{N}(0,1).
\end{align*}
Finally, if all the variances vanish, then again by Corollary \ref{cor:kest-stig}, we get  $\mathrm{a}Z_n=\E[\mathrm{a}Z_n]$ and this reproves  \cite[Theorem 2.4]{kesten-stigum-add}.
\end{remark}

%\paragraph*{Comments.}
%
%While during this work we have proven second order asymptotics for multitype branching process $(Z_n^{\Phi})_{n\in\N}$ counted with a random characteristic $\Phi$ of dimension one, and thus partially answered Question 1 of \cite{iksanov-kolesko-meiners2021}, these results can be easily extended with additional technicalities to the case of characteristics $\Phi$ of higher dimensions $d\geq 2$. We also believe that the method can be also extended to more general characteristics than the ones considered here. Natural questions to investigate are large deviation estimates and law of iterated logarithms for $(Z_n^{\Phi})_{n\in\N}$. In a forthcoming paper \cite{kolesko-sava-huss-urn} we give another application of our process to a two-urn model and prove corresponding limit theorems, again by an adequate choice of the characteristic $\Phi$. 

\paragraph{Acknowledgments.}We would like to thank the anonymous referee for the  careful reading and for the valuable suggestions that led to a substantial improvement of the paper.
The research of Ecaterina Sava-Huss is supported by the Austrian Science Fund (FWF): P 34129.

%\bibliography{range}{}
\bibliography{GW}{}
\bibliographystyle{abbrv}

\textsc{Konrad Kolesko}, Department of Mathematics, University of Gießen, Germany.\\
\texttt{Konrad.Kolesko@math.uni-giessen.de}

\textsc{Ecaterina Sava-Huss}, Institut für Mathematik, Universität Innsbruck, Austria.\\
\texttt{Ecaterina.Sava-Huss@uibk.ac.at}

\end{document}